\newtheorem{lem}{Lemma}
\newtheorem{rem}{Remark}
\newtheorem{thm}{Theorem}
\begin{document}

\title{
Some properties of ${\tau}$-adic expansions \\
on hyperelliptic Koblitz curves
}

\author{Keisuke Hakuta}
\address{Yokohama Research Laboratory, Hitachi, Ltd., \\
292, Yoshida-cho, Totsuka-ku, Yokohama, 244-0817, Japan}
\email{keisuke.hakuta.cw@hitachi.com}
\author{Hisayoshi Sato}
\address{Yokohama Research Laboratory, Hitachi, Ltd., \\
292, Yoshida-cho, Totsuka-ku, Yokohama, 244-0817, Japan}
\email{hisayoshi.sato.th@hitachi.com}
\author{Tsuyoshi Takagi}
\address{Institute of Mathematics for Industry, Kyushu University, 
\\744, Motooka, Nishi-ku, Fukuoka, Fukuoka, 819-0395, Japan}
\email{takagi@imi.kyushu-u.ac.jp}

\subjclass[2010]{Primary~11A63, Secondary~94A60}
\keywords{hyperelliptic Koblitz curve, cryptosystem, 
${\tau}$-adic NAF, Frobenius expansion, 
endomorphism, Fincke-Pohst algorithm. }

\begin{abstract}
This paper explores two techniques 
on a family of hyperelliptic curves that have been proposed 
to accelerate computation of scalar multiplication 
for hyperelliptic curve cryptosystems. 
In elliptic curve cryptosystems, 
it is known that Koblitz curves admit fast scalar multiplication, 
namely, the ${\tau}$-adic non-adjacent form ($\tau$-NAF). 
It is shown that the $\tau$-NAF has the three properties: 
(1) existence, (2) uniqueness, 
and (3) minimality of the Hamming weight. 
These properties are not only of intrinsic mathematical interest, 
but also desirable in some cryptographic applications. 
On the other hand, 
G{\"u}nther, Lange, and Stein have proposed 
two generalizations of $\tau$-NAF 
for a family of hyperelliptic curves, 
called \emph{hyperelliptic Koblitz curves}. 
However, to our knowledge, 
it is not known whether the three properties are true or not. 
We provide an answer to the question. 
Our investigation shows that 
the first one has only the existence 
and the second one has the existence and uniqueness. 
Furthermore, we shall prove that there exist 16 digit sets 
so that one can achieve the second one. 
\end{abstract}

\maketitle

\section{Introduction}
\label{sec:hyperKC_intro}

In 1989, Koblitz in \cite{Kob89} proposed hyperelliptic curve cryptosystems (HECC)
which is an alternative approach to construct public key cryptosystems. 
As is the case in elliptic curve cryptosystems (ECC), 
scalar multiplication is the dominant operation. 
In order to accelerate the scalar multiplication, 
many efficient techniques have been proposed. 
In the case of ECC, one such technique is to use curves 
over finite fields of characteristic two 
because these curves have high affinity for software/hardware implementations. 
Koblitz in \cite{Kob91} proposed a family of elliptic curves 
for cryptographic use: 
\begin{equation*}
{E_a} : y^2 + xy = x^3 + ax^2 + 1, \quad %
a \in \mathbb{F}_2 
\end{equation*}
over a finite field ${\mathbb{F}_2}$. 
These curves are called Koblitz curves. 
For some cryptographic usage, we focus on 
the group of $\mathbb{F}_{2^m}$-rational points 
${E_a}(\mathbb{F}_{2^m})$ 
for some $m \geq 2$. 
Let $\tau$ be the Frobenius map on $E_a$, 
\begin{equation*}
\tau: {E_a}(\mathbb{F}_{2^m}) \rightarrow {E_a}(\mathbb{F}_{2^m}), %
\quad (x, y) \mapsto (x^{2}, y^{2}). 
\end{equation*}
Especially, Koblitz curves admit fast scalar multiplication, 
namely the $\tau$-adic non-adjacent form ($\tau$-NAF for short) 
proposed by Solinas \cite{Sol00}. 
$\tau$-NAF of $\alpha \in \mathbb{Z}[\tau]$ 
with respect to the digit set $\{0, \pm{1} \}$, 
is a $\tau$-adic expansion 
$\alpha = \sum_{i=0}^{\ell-1} {c_i} \tau^{i}$ such that 
${c_{i+1}}{c_{i}} = 0$ for all $i$ ($0 \leq i \leq \ell - 2$) 
and $c_i \in \{0, \pm{1} \}$ for all $i$ ($0 \leq i \leq \ell - 1$). 
$\tau$-NAF on Koblitz curves has three properties: 

\begin{description}
\item[(1)] Existence (\cite[Theorem~1, Algorithm~1]{Sol00}): 
Every $\alpha \in \mathbb{Z}[\tau]$ 
has a $\tau$-NAF with respect to 
the digit set $\{0, \pm{1} \}$. 
\item[(2)] Uniqueness (\cite[Theorem~1]{Sol00}): 
Each $\alpha \in \mathbb{Z}[\tau]$ 
can be uniquely represented as a $\tau$-NAF 
with respect to the digit set $\{0, \pm{1} \}$. 
\item[(3)] Minimality (\cite[Theorem~1]{AHP05}, \cite[Theorem~1]{AHP06b}, \cite[p.80]{HST_JMI}): 
The $\tau$-NAF has minimal Hamming weight 
among all Frobenius expansions 
with respect to the digit set $\{0, \pm{1} \}$. 
\end{description}

The existence must be satisfied for concrete cryptographic implementations. 
The uniqueness and the minimality are not only of intrinsic mathematical interest, 
but also desirable in some cryptographic applications 
such as batch verification (cf. \cite{CL06}, \cite{CY07}, \cite{HKST12}). 
For higher width version of the $\tau$-NAF on Koblitz curves 
or another special types of elliptic curves, 
these properties have been investigated 
(cf. \cite{AHP06a}, \cite{AHP08}, \cite{AHP10}, %
\cite{BMX08}, \cite{HST10}, \cite{Heu10}, %
\cite{HK10}, \cite{HK11}, \cite{HK12}). 

On the other hand, 
G{\"u}nther, Lange, and Stein in \cite{GLS00}, \cite{GLS01} 
have proposed two generalizations of $\tau$-NAF 
for a family of hyperelliptic curves 
\begin{equation}
{C_a}: y^{2} + xy = x^{5} + ax^{2} + 1, %
\quad a \in \mathbb{F}_{2} \label{hyperKob} %
\end{equation}
over a finite field $\mathbb{F}_{2}$. 
The curves are listed in \cite[Table~1]{Kob89}. 
We call the curves as \emph{hyperelliptic Koblitz curves}. 
We identify $\{0, 1\} (\subset \mathbb{Z})$ 
with ${\mathbb{F}_2}$ via the natural map 
$f:\{0, 1\} \rightarrow {\mathbb{F}_2}$, 
$a \mapsto a \bmod{2}$. 
Let $\mathrm{Jac}({C_a})(\mathbb{F}_{2^m})$ %
($\cong \mathrm{Pic}^{0}(C/\mathbb{F}_{2^m})$) 
be the Jacobian variety of ${C_a}$ 
and ${\tau}$ the Frobenius map on $\mathrm{Jac}({C_a})$. 
We can regard ${\tau}$ as a complex number 
which satisfies the following characteristic equation 
\begin{equation}
{\tau}^{4} - {\mu}{\tau}^{3} - 2{\mu}{\tau} + 4 = 0, %
\quad %
{\rm where} \quad \mu = (-1)^{1-a}. \label{hyperKC_cha_eqn} %
\end{equation}

The two generalizations are as follows: 

\begin{description}
\item[(i)] GLS $\tau$-adic expansion: 
One is the $\tau$-adic expansion with the strategy 
``at least one of four consecutive coefficients is zero''. 
The expansion of $\alpha \in \mathbb{Z}[\tau]$ is 
a $\tau$-adic expansion $\alpha = \sum_{i = 0}^{\ell-1} {c_i}{\tau}^{i}$ 
such that ${c_{i+3}}{c_{i+2}}{c_{i+1}}{c_{i}} = 0$ for all $i$ 
($0 \leq i \leq \ell - 4$) and 
${c_i} \in \mathcal{D} = %
\{0, \pm{1}, \pm{2}, \pm{3} \}$ for all $i$ 
($0 \leq i \leq \ell - 1$). 
We will call the above expansion as \emph{GLS $\tau$-adic expansion}. 
\item[(ii)] $\tau$-NAF: 
The other is a sparse $\tau$-adic expansion. 
The expansion of $\alpha \in \mathbb{Z}[\tau]$ is 
a $\tau$-adic expansion $\alpha = \sum_{i = 0}^{\ell-1} {c_i}{\tau}^{i}$ 
such that ${c_{i+1}}{c_{i}} = 0$ for all $i$ 
($0 \leq i \leq \ell - 2$) and 
${c_i} \in \widetilde{\mathcal{D}} = %
\{ 0, \pm{1}, \pm{2}, \pm{(1 + {\tau})}, %
\pm{(1 - {\tau})}, \pm{(1 - 2{\tau})}, \pm{2} + {\tau} \}$ 
for all $i$ ($0 \leq i \leq \ell - 1$). 
We will call the above expansion as \emph{$\tau$-NAF}. 
\end{description}

For the GLS $\tau$-adic expansion, 
the authors say (\cite[p.9]{GLS00}) 
``Our experiments show that the expansion is always finite. 
However, we were unable to close this final gap so far''. 
In addition, the authors do not give any information 
about the existence of $\tau$-NAF. 
Moreover, to our knowledge, 
there are no proofs of these existences 
in the relevant literature 
(cf. \cite{ACDFLNV05}, \cite{GLS00}, \cite{GLS01}, \cite{Lan01}, \cite{Lan05}). 
This raises the question of whether or not 
the three properties are true for hyperelliptic Koblitz curves. 

\subsection{Contribution of this paper} 
\label{subsec:hyperKC_contribution}

The purpose of this paper is to explore the above questions. 
We show a summary of this paper in Table~\ref{table:hyperKC_summary}. 

\begin{table}[htbp]
\caption{Properties of two $\tau$-adic expansions on hyperelliptic Koblitz curves} 
\begin{center}
\begin{tabular}{|c|c|c|c|}\hline 
Types of expansion & Existence & Uniqueness & Minimality \\\hline\hline
GLS $\tau$-adic expansion & Yes & No & No \\\hline
$\tau$-NAF & Yes & Yes & No \\\hline
\end{tabular}
\end{center}
\label{table:hyperKC_summary}
\end{table}

Our investigation shows that 
the GLS $\tau$-adic expansion has only the existence 
and the $\tau$-NAF has the existence and uniqueness. 
Furthermore, we can prove that there exist 16 digit sets 
so that one can achieve the $\tau$-NAF. 

The rest of this paper is organized as follows. 
In Section~\ref{sec:hyperKC_prop_GLS} and Section~\ref{sec:hyperKC_prop_tnaf}, 
we explore the above questions for the GLS $\tau$-adic expansion 
and for the $\tau$-NAF, respectively. 
Section~\ref{sec:hyperKC_conclusion} concludes the paper. 


\section{Properties of the GLS $\tau$-adic expansion}
\label{sec:hyperKC_prop_GLS}

In this section, we provide the three properties 
of GLS $\tau$-adic expansion on hyperelliptic Koblitz curves. 

\subsection{Algorithm to compute GLS $\tau$-adic expansion}
\label{subsec:hyperKC_gls_review}

We review the algorithm to compute GLS $\tau$-adic expansion. 
First, we prove a lemma that will be useful 
to show the existences of the GLS $\tau$-adic expansion and $\tau$-NAF. 
Whereas the former part of the lemma is already discussed 
in \cite[pp.6--7]{GLS00}, \cite[p.111]{GLS01} 
for only the case $\mu = 1$, the following proof of the lemma covers 
both $\mu = 1$ and $\mu = -1$. 

\begin{lem} {\rm\bfseries [Divisibility by $\tau$]}\ 
\label{lem:hyperKC_div}
Let 
$\alpha = s + t{\tau} + u{\tau}^{2} + v{\tau}^{3} \in {\mathbb{Z}[\tau]}$ 
($s, t, u, v \in \mathbb{Z}$), 
then we have
${\tau} \mid \alpha \iff 4 \mid s$. 
In particular, 
we have ${\tau}^{2} \mid {\alpha} \ \implies \ {4 \mid s}$ 
and $4 \mid ({\mu}s/2 + t)$. 
\end{lem}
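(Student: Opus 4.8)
The plan is to work throughout in the $\mathbb{Z}$-module $\mathbb{Z}[\tau]$ with the basis $\{1, \tau, \tau^{2}, \tau^{3}\}$, which is legitimate because the characteristic equation \eqref{hyperKC_cha_eqn} is monic of degree four, so every element has a unique representation $s + t\tau + u\tau^{2} + v\tau^{3}$. The key algebraic fact I would extract first from \eqref{hyperKC_cha_eqn} is that $\tau$ divides $4$: rearranging the characteristic equation gives
\begin{equation*}
4 = \tau\bigl(-\tau^{3} + \mu\tau^{2} + 2\mu\bigr),
\end{equation*}
so $4 \in \tau\,\mathbb{Z}[\tau]$. Since $t\tau + u\tau^{2} + v\tau^{3} = \tau(t + u\tau + v\tau^{2})$ is trivially divisible by $\tau$, we have $\alpha \equiv s \pmod{\tau}$, and therefore $\tau \mid \alpha \iff \tau \mid s$, reducing the claimed equivalence to a statement about an ordinary integer $s$.

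It then suffices to prove $\tau \mid s \iff 4 \mid s$ for $s \in \mathbb{Z}$. The backward direction is immediate from $\tau \mid 4$. For the forward direction, I would write $s = \tau\gamma$ with $\gamma = a + b\tau + c\tau^{2} + d\tau^{3}$, expand using $\tau^{4} = \mu\tau^{3} + 2\mu\tau - 4$ to push $\tau\gamma$ back into the basis $\{1, \tau, \tau^{2}, \tau^{3}\}$, and compare coefficients. The constant coefficient then reads $s = -4d$, which forces $4 \mid s$. I expect this forward direction to be the only real obstacle: a norm argument (using $N(\tau) = 4$) only yields $2 \mid s$, so one genuinely needs the coefficient comparison — equivalently, the identification $\mathbb{Z}[\tau]/(\tau) \cong \mathbb{Z}/4\mathbb{Z}$ coming from the constant term $4$ of the characteristic polynomial — to obtain the sharper divisibility by $4$.

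For the ``in particular'' statement, assuming $\tau^{2} \mid \alpha$ I would first invoke $\tau \mid \alpha$ to get $4 \mid s$, and write $s = 4s'$. The second congruence comes from the identity
\begin{equation*}
4 - 2\mu\tau = \tau^{3}(\mu - \tau),
\end{equation*}
which again follows directly from \eqref{hyperKC_cha_eqn} and shows $4 \equiv 2\mu\tau \pmod{\tau^{2}}$. Reducing $\alpha \equiv s + t\tau \pmod{\tau^{2}}$ and substituting $s = 4s' \equiv 2\mu s'\tau$ gives $\alpha \equiv (2\mu s' + t)\tau \pmod{\tau^{2}}$. Since $\tau^{2} \mid (2\mu s' + t)\tau$ is equivalent to $\tau \mid (2\mu s' + t)$, the main equivalence applied to the integer $2\mu s' + t = \mu s/2 + t$ yields $4 \mid (\mu s/2 + t)$, as required.
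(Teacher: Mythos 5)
Your proposal is correct and follows essentially the same route as the paper: the forward direction rests on the same coefficient comparison after reducing $\tau^{4}$ via the characteristic equation (constant term $=-4\times$ top coefficient), the backward direction on the same identity $4=\tau(-\tau^{3}+\mu\tau^{2}+2\mu)$, and the ``in particular'' part arrives at the same quantity $2\mu s'+t=\mu s/2+t$, merely phrased as a congruence modulo $\tau^{2}$ rather than as an explicit computation of $\alpha/\tau$. The quotient-ring packaging $\mathbb{Z}[\tau]/(\tau)\cong\mathbb{Z}/4\mathbb{Z}$ and the remark that a norm argument only yields $2\mid s$ are nice additions, but they do not change the underlying argument.
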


\begin{proof}
We first show the former part. 

\noindent
($\implies$) 
We assume ${\tau} \mid \alpha$. 
There exists $s^{\prime}, t^{\prime}, u^{\prime}, %
v^{\prime} \in \mathbb{Z}$ such that 
$\alpha = {\tau}(s^{\prime} + t^{\prime}{\tau} + u^{\prime}{\tau}^{2} + v^{\prime}{\tau}^{3})$. 
Then 
\begin{eqnarray*}
\alpha 
& = & 
{\tau}(s^{\prime} + t^{\prime}{\tau} + u^{\prime}{\tau}^{2} %
+ v^{\prime}{\tau}^{3}) \nonumber \\%
& = &
s^{\prime}{\tau} + t^{\prime}{\tau}^{2} + u^{\prime}{\tau}^{3} %
+ v^{\prime}{\tau}^{4} \nonumber \\%
& = &
s^{\prime}{\tau} + t^{\prime}{\tau}^{2} + u^{\prime}{\tau}^{3} %
+ v^{\prime}(\mu{\tau}^{3} + 2{\mu}{\tau} - 4) \nonumber \\%
& = &
-4v^{\prime} + (s^{\prime} + 2{\mu}v^{\prime}){\tau} + %
t^{\prime}{\tau}^{2} + (u^{\prime} + {\mu}v^{\prime}){\tau}^{3}. %
\end{eqnarray*}
Hence we obtain $s = 4v^{\prime}$. 

\medskip
\noindent
($\impliedby$) 
Conversely, we assume $4 \mid s$. 
There exists $s^{\prime} \in \mathbb{Z}$ 
such that $s = 4s^{\prime}$. 
Then 
\begin{eqnarray*}
\alpha 
& = & 
4s^{\prime} + t{\tau} + u{\tau}^{2} + v{\tau}^{3} \nonumber \\%
& = &
s^{\prime}(-{\tau}^{4} + {\mu}{\tau}^{3} + 2{\mu}{\tau}) %
+ t{\tau} + u{\tau}^{2} + v{\tau}^{3} \nonumber \\%
& = &
{\tau}((2{\mu}s^{\prime} + t) + u{\tau} %
+ ({\mu}s^{\prime} + v){\tau}^{2} - s^{\prime}{\tau}^{3}). %
\end{eqnarray*}
Therefore ${\tau} \mid {\alpha}$. 

Next, assume that ${\tau}^{2} \mid {\alpha}$. 
By the same argument as the first part, 
${\alpha}/{\tau} = (2{\mu}s^{\prime} + t) + u{\tau} %
+ ({\mu}s^{\prime} + v){\tau}^{2} - s^{\prime}{\tau}^{3}$ 
is divisible by $\tau$. 
Thus, $2{\mu}s^{\prime} + t = {\mu}s/{2} + t$ is divisible by $4$. 
\end{proof}

A process to construct GLS ${\tau}$-adic expansion 
is as follows. 
Set ${\alpha}_{i} := {\alpha} = s + t{\tau} + u{\tau}^{2} %
+ v{\tau}^{3} \in {\mathbb{Z}[\tau]}$ 
($s, t, u, v \in \mathbb{Z}$). The initial index $i$ is zero. 
We choose a $c$ according to the following rules: 

\medskip
\noindent
(1) If $4 \mid s$ \ then set $c = 0$. 

\medskip
\noindent
(2-1) If $4 \centernot\mid s$ and  $2 \mid t$ \ 
then set $c$ according to the following table: 

\vspace*{-0.3cm}
\begin{table}[htbp]
\begin{center}
\begin{tabular}{l|cccccc} 
$t \bmod 4 \backslash s \bmod 8$ & 
$1$ & $2$ & $3$ & $5$ & $6$ & $7$ \\\hline
$0$ & $1$ & $2$ & $3$ & $-3$ & $-2$ & $-1$ \\
$2$ & $-3$ & $-2$ & $-1$ & $1$ & $2$ & $3$ \\
\end{tabular}
\end{center}
\end{table}

\medskip
\noindent
(2-2) If $4 \centernot\mid s$ and  $2 \centernot\mid t$ \ 
then set $c$ according to the following table: 

\vspace*{-0.3cm}
\begin{table}[htbp]
\begin{center}
\begin{tabular}{l|cccccc}
$d \bmod 2 \backslash s \bmod 8$ & 
$1$ & $2$ & $3$ & $5$ & $6$ & $7$ \\\hline
$0$ & $1$ & $2$ & $3$ & $-3$ & $-2$ & $-1$ \\
$1$ & $-3$ & $-2$ & $-1$ & $1$ & $2$ & $3$ \\
\end{tabular}
\end{center}
\end{table}

Then put ${\alpha}_{i+1} := ({\alpha}_{i} - c)/{\tau}$, $i:=i + 1$. 
Repeating the process until ${\alpha}_{i}$ will be zero for some $i$, 
leads the following Algorithm. 

\begin{algorithm}[H]
  \caption{GLS ${\tau}$-adic expansion 
}
  \label{alg:hyperKC_GLS}
  \begin{algorithmic}[1]
    \REQUIRE ${\alpha} = s + t{\tau} + u{\tau}^{2} + v{\tau}^{3} \in \mathbb{Z}[\tau]$ ($s, t, u, v \in \mathbb{Z}$)
    \ENSURE ${\tau}$-NAF of $\alpha$
    \STATE $i \leftarrow 0$
    \WHILE{$s \neq 0$ or $t \neq 0$ or $u \neq 0$ or $v \neq 0$}
    \STATE $c \leftarrow s \bmod 4$
    \IF{$c \neq 0$}
    \IF{( ($t \bmod 4 = 0$ and $s \bmod 8 > 4$) \\
or ($t \bmod 4 = 2$ and $s \bmod 8 < 4$) \\
or ($t \bmod 2 = 1$ and $s \bmod 8 > 4$ and $v \bmod 2 = 0$) \\
or ($t \bmod 2 = 1$ and $s \bmod 8 < 4$ and $v \bmod 2 = 1$) )}
    \STATE $c \leftarrow c - 4$
    \ENDIF
    \ENDIF
    \STATE ${c_i} \leftarrow c$
    \STATE $d \leftarrow \frac{\mu(s - c)}{4}$, $s \leftarrow 2d + t$, $t \leftarrow u$, $u \leftarrow d + v$, $v \leftarrow -{\mu}d$
    \STATE $i \leftarrow i + 1$
    \ENDWHILE
    \STATE $\ell \leftarrow i$
    \RETURN $(c_{\ell-1}, \cdots, c_1, c_0)_{\tau}$
  \end{algorithmic}
\end{algorithm}

\subsection{A gap in the proof of the existence}
\label{subsec:hyperKC_gls_gap} 

In \cite[p.9]{GLS01}, the authors tried to prove 
the existence of GLS $\tau$-adic expansion 
by using the Triangle inequality for the usual absolute value. 
If there exists a non negative integer $i$ 
such that $|{\alpha}_{i}| > 3(\sqrt{2} + 1)$, 
then it satisies that $|{\alpha}_{i+1}| < |{\alpha}_{i}|$, 
where $|\cdot|$ is the usual absolute value. 
However, if $|{\alpha}_{i}| \leq 3(\sqrt{2} + 1)$, 
we can not show that there exists a positive integer ${j_0}$ 
such that $|{\alpha}_{i+{j_0}}| < |{\alpha}_{i}|$ 
by using the Triangle inequality. 
The gap in the proof of the existence of GLS $\tau$-adic expansion 
is to prove that (1) there are only finitely many elements 
$\alpha \in {\mathbb{Z}[\tau]}$ such that 
$|{\alpha}| \leq 3(\sqrt{2} + 1)$, 
(2) each element of (1) has GLS $\tau$-adic expansion. 

On the other hand, Lange in \cite{Lan01}, \cite{Lan05} 
generalized Frobenius expansions on subfield elliptic curves 
to Frobenius expansions on subfield hyperelliptic curves. 
Lange explained that the Fincke-Pohst algorithm 
is useful to prove the existence of Frobenius expansions 
on subfield hyperelliptic curves. 
The Fincke-Pohst algorithm (\cite{FP85}) enumerates 
all shorter vectors for a given lattice and an upper bound. 
In order to prove the existence by using the Fincke-Pohst algorithm, 
we use the following consideration. 
Let $C$ be a hyperelliptic curve of genus $g$ over a finite field $\mathbb{F}_{q}$, 
and ${\tau}_{1}, \ldots, {\tau}_{g}$ the $g$ independent roots 
of the characteristic equation of $C/\mathbb{F}_{q}$. Take the set of elements 
\[
\Lambda := \Biggl{\{} \Biggl{(} \sum_{j = 0}^{2g - 1} {c_j}{{\tau}_{1}}^{j}, %
\ldots, \sum_{j = 0}^{2g - 1} {c_j}{{\tau}_{g}}^{j} \Biggr{)} %
\Biggm{|} 
{c_j} \in \mathbb{Z} \Biggr{\}}. %
\]
It is easy to see that $\Lambda$ is a lattice in $\mathbb{C}^g$, 
where $\mathbb{C}$ denotes the complex number field. 
Lange proposed to investigate the norm of vectors in this lattice, 
where the norm is given by the usual Euclidean norm of $\mathbb{C}^g$: 
\[
\mathcal{N}: ({x_1}, \ldots, {x_g}) %
\mapsto \sqrt{|{x_1}|^{2} + \cdots + |{x_g}|^{2}}. %
\]
Although the Fincke-Pohst algorithm exponential time complexity, 
it works effectively in our situation. 
Since $\mathcal{N}(\alpha)^{2}$ is a quadratic form 
in the $2g$ variables ${c_0}, {c_1}, \ldots, {c_{2g-1}}$, 
it is easily verify that for $\alpha \in \mathbb{Z}[\tau]$, 
$\mathcal{N}(\alpha)^{2}$ takes integer value, 
namely, $\mathcal{N}(\alpha)^{2} \in \mathbb{Z}$ 
(see \cite[Example~8.5]{Lan01}). 
Moreover,  $\mathcal{N}$ satisfies the Triangle inequality, 
that is, 
$\mathcal{N}({\alpha_{1}} + {\alpha_{2}}) \leq %
\mathcal{N}({\alpha_{1}}) + \mathcal{N}({\alpha_{2}})$ 
for ${\alpha_{1}}, {\alpha_{2}} \in \mathbb{Z}[\tau]$. 
The above facts can be used to prove the existence. 
We outline of the proof of the existence. 
\begin{description}
\item[(I)] 
We choose an element ${\alpha}_{0}:={\alpha} \in \mathbb{Z}[\tau]$, 
and fixed it. 
We first prove that there exist a positive integer ${j_0}$ 
and some constant $C \in \mathbb{C}$ such that 
$\mathcal{N}({\alpha}_{i+{j_0}}) < \mathcal{N}({\alpha}_{i})$ 
for ${\alpha}_{i} \in \mathbb{Z}[\tau]$ with $\mathcal{N}({\alpha_i}) > C$. 
Since $\mathcal{N}(\alpha)^{2} \in \mathbb{Z}$, 
we can see that the number of elements of 
$\{ {\alpha_i} \mid \mathcal{N}({\alpha_i}) > C \}$ is finite. 
\item[(II)] 
We find all elements in the set 
$\{ {\alpha} \in \mathbb{Z}[\tau] \mid \mathcal{N}({\alpha_i}) \leq C \}$ 
or equivalently 
the set 
$\{ {\alpha} \in \mathbb{Z}[\tau] \mid \mathcal{N}^{2}({\alpha_i}) %
\leq \lfloor C^2 \rfloor \}$, 
where $\lfloor \cdot \rfloor$ is the floor function symbol. 
To do this, we can use the Fincke-Pohst algorithm 
because $\mathcal{N}(\alpha)^{2}$ is a quadratic form. 
\item[(III)] 
For each element which is found in (II), 
we compute its GLS ${\tau}$-adic expansion 
and verify that 
the GLS ${\tau}$-adic expansions have finite length. 
\end{description}

Next, we will fill the gap according to the outline. 

\subsection{Properties}
\label{subsec:hyperKC_gls_properties} 

\begin{thm} 
\label{thm:hyperKC_GLS_existence}
{\rm\bfseries 
[Existence of the GLS ${\tau}$-adic expansion]}\ 
Every $\alpha \in \mathbb{Z}[\tau]$ 
has a GLS ${\tau}$-adic expansion with digit set $\mathcal{D}$. 
\end{thm}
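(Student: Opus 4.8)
The plan is to follow the three-step outline (I)--(III) that the authors have laid out immediately before the statement, using the Euclidean norm $\mathcal{N}$ on the lattice $\Lambda \subset \mathbb{C}^4$ rather than the naive absolute value. The overall strategy is a descent argument: I want to show that the sequence $\alpha_0, \alpha_1, \alpha_2, \ldots$ produced by Algorithm~\ref{alg:hyperKC_GLS} reaches $0$ in finitely many steps. The key engine is Lemma~\ref{lem:hyperKC_div}, which guarantees that at each step the chosen digit $c \in \mathcal{D}$ makes $\alpha_i - c$ divisible by $\tau$, so that $\alpha_{i+1} = (\alpha_i - c)/\tau \in \mathbb{Z}[\tau]$ is well-defined; the explicit coordinate update $s \leftarrow 2d + t$, $t \leftarrow u$, $u \leftarrow d + v$, $v \leftarrow -\mu d$ (with $d = \mu(s-c)/4$) is exactly the division-by-$\tau$ formula, so termination of the algorithm is equivalent to the existence of a finite GLS expansion.

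For step (I), I would first establish a norm-contraction estimate. Because $|\tau| = \sqrt{2}$ for each of the four conjugate roots of \eqref{hyperKC_cha_eqn}, dividing by $\tau$ scales the norm by a controlled factor, while subtracting a digit $c \in \mathcal{D}$ changes the norm by at most $\mathcal{N}(c) \le 3\mathcal{N}(1)$ via the Triangle inequality. Concretely, $\mathcal{N}(\alpha_{i+1}) = \mathcal{N}((\alpha_i - c)/\tau)$, and I would bound this using the multiplicativity of the norm under the Frobenius action together with $\mathcal{N}(c) \le 3(\sqrt{2}+1)$-type bounds. The goal is to produce an explicit constant $C$ such that whenever $\mathcal{N}(\alpha_i) > C$ we have strict decrease $\mathcal{N}(\alpha_{i+j_0}) < \mathcal{N}(\alpha_i)$ for some fixed $j_0$ (possibly looking several steps ahead rather than one, which is why the statement allows a $j_0$). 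Since $\mathcal{N}(\alpha)^2 \in \mathbb{Z}$ for $\alpha \in \mathbb{Z}[\tau]$, strict decrease of a nonnegative integer quantity cannot continue forever, so the sequence must eventually enter the finite region $\{\mathcal{N}(\alpha) \le C\}$.

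Step (II) is then a finite enumeration: the set $\{\alpha \in \mathbb{Z}[\tau] \mid \mathcal{N}(\alpha)^2 \le \lfloor C^2 \rfloor\}$ is a finite set of lattice points inside a ball, and since $\mathcal{N}(\alpha)^2$ is a positive-definite integral quadratic form in the four coefficients $(s,t,u,v)$, the Fincke--Pohst algorithm enumerates all of them exhaustively. For step (III), I would run Algorithm~\ref{alg:hyperKC_GLS} on each of these finitely many elements and verify computationally that each terminates (equivalently, that the sequence $\alpha_i$ starting from any point in the ball stays in a finite set and eventually hits $0$, with no nonterminating cycle). The two values $\mu = 1$ and $\mu = -1$ must both be checked, since the characteristic equation and hence the embedding of $\Lambda$ differs in the two cases.

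The main obstacle I anticipate is step (I), specifically pinning down the contraction: a one-step estimate $\mathcal{N}(\alpha_{i+1}) < \mathcal{N}(\alpha_i)$ need not hold for every $\alpha_i$ with large norm, which is precisely why the single-step absolute-value argument in \cite{GLS00,GLS01} stalled. The delicate point is to show that after a bounded number $j_0$ of divisions by $\tau$ the accumulated shrinkage of the norm strictly dominates the total contribution of the subtracted digits, and to extract from this a clean, explicit threshold $C$; getting a usable $C$ small enough that the Fincke--Pohst enumeration in (II) remains tractable, yet rigorous enough to cover all large-norm elements, is the crux of the argument. Once $C$ is fixed, steps (II) and (III) are finite verifications that present no conceptual difficulty.
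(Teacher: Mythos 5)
Your plan coincides with the paper's proof in outline, but the one step you flag as ``the main obstacle I anticipate'' --- producing the explicit multi-step contraction and the threshold $C$ --- is exactly the content of the paper's argument, and you do not supply the idea that makes it work. The mechanism is a parity case analysis on the coefficients of $\alpha_i = s_i + t_i\tau + u_i\tau^2 + v_i\tau^3$ when the chosen digit $c_i$ is nonzero. By Lemma~\ref{lem:hyperKC_div} and the digit-selection rules, the parities of $t_i$ and $u_i$ determine how many consecutive zero digits immediately follow: if $t_i \equiv 0 \pmod 2$ then $\tau^2 \mid (\alpha_i - c_i)$, so $\alpha_{i+2} = (\alpha_i - c_i)/\tau^2$; if $t_i \equiv 1$ and $u_i \equiv 0$ one gets $\alpha_{i+3} = ((\alpha_i - c_i)/\tau - c_{i+1})/\tau^2$; and if both are odd one gets $\alpha_{i+4}$ after three subtractions and four divisions. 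In each case the triangle inequality together with $\mathcal{N}(\tau\beta) = \sqrt{2}\,\mathcal{N}(\beta)$ and $\max\{\mathcal{N}(c) : c \in \mathcal{D}\} = 3\sqrt{2}$ yields a strict decrease $\mathcal{N}(\alpha_{i+j_0}) < \mathcal{N}(\alpha_i)$ for $j_0 \in \{2,3,4\}$ once $\mathcal{N}(\alpha_i)$ exceeds $3\sqrt{2}$, $(18+15\sqrt{2})/7$, and $2+3\sqrt{2}$ respectively; the worst of these, $C = 2+3\sqrt{2}$, gives $\lfloor C^2\rfloor = 38$ as the Fincke--Pohst search radius. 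Without this case split, a generic ``look several steps ahead'' bound does not close, because a single division by $\tau$ followed by a digit subtraction need not shrink the norm at all, and you have no control over how often nonzero digits recur.

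Two smaller corrections. The lattice $\Lambda$ sits in $\mathbb{C}^g = \mathbb{C}^2$ for this genus-$2$ curve (it has rank $4$ over $\mathbb{Z}$, but the ambient space is $\mathbb{C}^2$, not $\mathbb{C}^4$), and the relevant digit bound is $\max\{\mathcal{N}(c)\} = 3\sqrt{2}$ attained at $c = \pm 3$, not a ``$3(\sqrt{2}+1)$-type'' bound --- the quantity $3(\sqrt{2}+1)$ belongs to the abandoned absolute-value argument of G\"unther--Lange--Stein, not to the Euclidean-norm argument. Your steps (II) and (III) agree with the paper: the enumeration finds $300$ elements with $\mathcal{N}(\alpha')^2 \le 38$ for each of $\mu = \pm 1$, and each is checked to have a terminating expansion.
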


\begin{proof}
Let ${\alpha}_{0} := {\alpha} = s + t{\tau} + u{\tau}^{2} %
+ v{\tau}^{3} \in {\mathbb{Z}[\tau]}$ 
($s, t, u, v \in \mathbb{Z}$). 
First, we claim that there exist 
${\alpha}^{\prime} \in {\mathbb{Z}[\tau]}$, 
${\ell}^{\prime} \in \mathbb{Z}_{>0}$ and 
$c_{0}, c_{1}, \ldots, c_{{\ell}^{\prime} - 1} \in \mathcal{D}$ 
such that $\mathcal{N}({\alpha}^{\prime}) \leq 2 + 3\sqrt{2}$ and 
\[
\alpha = \sum_{j=0}^{{\ell}^{\prime}-1} {c_{j}} + {\alpha}^{\prime}{\tau}^{{\ell}^{\prime}}. %
\]
Notice that 
$\max \{ \mathcal{N}(\alpha) \mid \alpha \in \mathcal{D} \} %
= 3\sqrt{2}$ (the equality holds when 
$\alpha = \pm{3}$). 
We put ${\alpha}_{i + 1} := ({\alpha}_{i} - {c_i})/{\tau}$ 
(${c_i} \in \mathcal{D}$). 
If ${c_i} = 0$, it always satisfies that 
$\mathcal{N}({\alpha}_{i + 1}) \leq \mathcal{N}({\alpha}_{i})/\sqrt{2} %
< \mathcal{N}({\alpha}_{i})$ for ${\alpha}_{i} \neq 0$. 
Otherwise, namely, ${c_i} \neq 0$, 
we put ${\alpha_{i}} = {s_i} + {t_i}{\tau} + {u_i}{\tau}^{2} + {v_i}{\tau}^{3}$ 
(${s_i}, {t_i}, {u_i}, {v_i} \in \mathbb{Z}$). 
We claim that we have $\mathcal{N}({\alpha}_{i + {j_0}}) < \mathcal{N}({\alpha}_{i})$ 
for some ${j_0} \in \{ 1, 2, 3, 4 \}$ and $\mathcal{N}({\alpha}_{i}) > 2 + 3\sqrt{2}$. 
To see this, there are three cases to consider (1) ${t_i} = 0 \bmod 2$, 
(2) ${t_i} = 1 \bmod 2$ and ${u_i} = 0 \bmod 2$, 
and (3) ${t_i} = 1 \bmod 2$ and ${u_i} = 0 \bmod 2$. 

\medskip
\noindent
{\bf Case~1. } ${t_i} = 0 \bmod 2$. \\
In this case we have ${\alpha}_{i + 2} = ({\alpha}_{i} - {c_i})/{\tau}^{2}$. 
Taking the norm of the both sides in the equation 
and applying the Triangle inequality gives 
$\mathcal{N}({\alpha}_{i + 2}) < \mathcal{N}({\alpha}_{i})$ 
for $\mathcal{N}({\alpha}_{i}) > 3\sqrt{2}$. 

\medskip
\noindent
{\bf Case~2. } ${t_i} = 1 \bmod 2$ and ${u_i} = 0 \bmod 2$. \\
In this case we have 
${\alpha}_{i + 3} = %
(({\alpha}_{i} - {c_i})/{\tau} - {c_{i + 1}})/{\tau}^{2}$. 
As in case~1, 
we obtain 
$\mathcal{N}({\alpha}_{i + 3}) < \mathcal{N}({\alpha}_{i})$ 
for $\mathcal{N}({\alpha}_{i}) > (18 + 15\sqrt{2})/7$. 

\medskip
\noindent
{\bf Case~3. } ${t_i} = 1 \bmod 2$ and ${u_i} = 1 \bmod 2$. \\
One has immediately that ${\alpha}_{i + 4} = %
( ( ({\alpha}_{i} - {c_i})/{\tau} - {c_{i + 1}})/{\tau} - {c_{i + 2}})/{\tau}^{2}$. 
The same argument as case~1 and case~2 yields 
$\mathcal{N}({\alpha}_{i + 4}) < \mathcal{N}({\alpha}_{i})$ 
for $\mathcal{N}({\alpha}_{i}) > 2 + 3\sqrt{2}$. 

\medskip
\noindent
Note that $3\sqrt{2} < (18 + 15\sqrt{2})/7 < 2 + 3\sqrt{2} \approx 6.24$. 
Hence the claim follows. 

Next, we find all elements 
${\alpha}^{\prime} \in {\mathbb{Z}[\tau]}$ 
satisfying $\mathcal{N}({\alpha}^{\prime}) \leq 2 + 3\sqrt{2}$, 
namely $\mathcal{N}({\alpha}^{\prime})^{2} \leq 38$ %
($< (2 + 3\sqrt{2})^{2} \approx 38.97$). 
We use the Fincke-Pohst algorithm to find them. 

\begin{center}

\end{center}

From Table~\ref{table:hyperKC_tnaf_existence_a=1} 
(or Table~\ref{table:hyperKC_tnaf_existence_a=0}), 
there are 300 elements whose square values of norm are less than or equal to $38$. 
The GLS ${\tau}$-adic expansion of ${\alpha}^{\prime}$ with 
$\mathcal{N}({\alpha}^{\prime}) \leq 38$ are also shown 
in Table~\ref{table:hyperKC_tnaf_existence_a=1} 
and Table~\ref{table:hyperKC_tnaf_existence_a=0}. 
This concludes the proof. 
\end{proof}

\begin{thm} 
\label{thm:hyperKC_GLS_non_unique}
{\rm\bfseries 
[Non-uniqueness of the GLS ${\tau}$-adic expansion]}\ 
It does not hold the uniqueness for GLS ${\tau}$-adic expansion %
with respect to $\mathcal{D}$ on hyperelliptic Koblitz curves. 
\end{thm}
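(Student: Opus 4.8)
The plan is to prove non-uniqueness by exhibiting a single element $\alpha \in \mathbb{Z}[\tau]$ that admits two distinct GLS $\tau$-adic expansions with digit set $\mathcal{D}$. Since the statement to be refuted is a universal uniqueness claim, one explicit counterexample suffices, and I would aim to make it uniform in $\mu = \pm 1$ so that a single computation handles both curves.

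First I would manufacture a nontrivial $\tau$-adic relation directly from the characteristic equation~(\ref{hyperKC_cha_eqn}). Rewriting $\tau^{4} = \mu\tau^{3} + 2\mu\tau - 4$ gives $4 = -\tau^{4} + \mu\tau^{3} + 2\mu\tau$, and subtracting $1$ yields a second representation of $3$:
\[
3 = -1 + 2\mu\,\tau + 0\cdot\tau^{2} + \mu\,\tau^{3} - \tau^{4}.
\]
Thus the element $\alpha = 3$ has, on one hand, the trivial single-digit expansion $(3)_{\tau}$ (the output of Algorithm~\ref{alg:hyperKC_GLS}, as recorded in Tables~\ref{table:hyperKC_tnaf_existence_a=1} and~\ref{table:hyperKC_tnaf_existence_a=0}), and on the other hand the length-five expansion with coefficient vector $(c_{0},\dots,c_{4}) = (-1, 2\mu, 0, \mu, -1)$, i.e.\ $(-1, \mu, 0, 2\mu, -1)_{\tau}$ in the high-to-low notation of the paper.

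Then I would verify the three requirements in turn. (i) Both strings evaluate to $3$: the single digit is immediate, and for the long one I substitute $\tau^{4} = \mu\tau^{3} + 2\mu\tau - 4$ and cancel. (ii) Every digit lies in $\mathcal{D} = \{0,\pm 1,\pm 2,\pm 3\}$, since $\mu \in \{\pm 1\}$ forces $2\mu \in \{\pm 2\}$. (iii) The window condition $c_{i+3}c_{i+2}c_{i+1}c_{i} = 0$ holds: the only two length-four windows are $(c_{0},c_{1},c_{2},c_{3})$ and $(c_{1},c_{2},c_{3},c_{4})$, and both contain the zero entry $c_{2}$. The main subtlety — and the reason $3$ is the right witness rather than some naively longer re-expansion — is precisely point (iii): a generic relation produced from~(\ref{hyperKC_cha_eqn}) need not respect the ``at least one of four consecutive coefficients is zero'' constraint, so I must arrange the carry digits so that a single zero coefficient falls inside every length-four window. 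Because the placement of $c_{2} = 0$ and the digit-set membership both succeed simultaneously for $\mu = +1$ and $\mu = -1$, the argument closes uniformly; and since the two expansions have different lengths they are manifestly distinct, which establishes the theorem.
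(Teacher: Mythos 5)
Your proposal is correct and takes essentially the same approach as the paper: both produce a counterexample by rearranging the characteristic equation $\tau^{4} - \mu\tau^{3} - 2\mu\tau + 4 = 0$ into a second GLS-admissible expansion and checking that a zero digit lands in every length-four window. The paper's witness is $\alpha = b\tau^{2} + 2\mu\tau - 1 = (0,b,2\mu,-1)_{\tau} = (1,-\mu,b,0,3)_{\tau}$, while yours is $\alpha = 3 = (3)_{\tau} = (-1,\mu,0,2\mu,-1)_{\tau}$; these are the same identity read from opposite sides, and your verification is complete.
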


\begin{proof}
We give an element which has two different GLS ${\tau}$-adic expansion. 
We set $\alpha = b{\tau}^{2} + 2\mu{\tau} - 1 \in \mathbb{Z}[\tau]$ 
($b \in \mathcal{D}$). 
Since $b \in \mathcal{D}$, 
$\alpha = (0, b, 2\mu, -1)_{\tau}$ is a GLS ${\tau}$-adic expansion. 
On the other hand, by Algorithm~\ref{alg:hyperKC_GLS}, 
we also have $\alpha = (1, -\mu, b, 0, 3)_{\tau}$, 
which is another GLS ${\tau}$-adic expansion. 
Hence there exists an element which can not be uniquely represented 
as a GLS ${\tau}$-adic expansion. 
\end{proof}

\begin{rem}
Indeed, for $\mu = \pm{1}$ there are $252$ elements 
of the form $\alpha = ({c_3}, {c_2}, {c_1}, {c_0})_{\tau}$, 
where ${c_i} \in \mathcal{D}$ and ${c_{3}}{c_{2}}{c_{1}}{c_{0}} = 0$. 
These elements are listed in Table~\ref{table:hyperKC_non_gls_example_1} 
and Table~\ref{table:hyperKC_non_gls_example_2} 
in Appendix~\ref{sec:hyperKC_append_gls_non_uniqueness}. 
\end{rem}

\begin{thm} 
\label{thm:hyperKC_GLS_non_optimality}
{\rm\bfseries 
[Non-minimality of GLS ${\tau}$-adic expansion]}\ 
It does not hold the minimality for GLS ${\tau}$-adic expansion %
with respect to $\mathcal{D}$ on hyperelliptic Koblitz curves. 
\end{thm}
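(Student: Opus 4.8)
The plan is to disprove minimality by exhibiting a single $\alpha \in \mathbb{Z}[\tau]$ that admits two Frobenius expansions with digits in $\mathcal{D}$ of different Hamming weights, where the expansion returned by Algorithm~\ref{alg:hyperKC_GLS} is the heavier one. The element already constructed in the proof of Theorem~\ref{thm:hyperKC_GLS_non_unique} serves directly, so I would reuse it rather than search for a new witness. Concretely, take $\alpha = b\tau^2 + 2\mu\tau - 1$ with $b \in \mathcal{D} \setminus \{0\}$; the restriction $b \neq 0$ is imposed only so that the weight counts below are exact and not accidentally reduced by a vanishing middle digit.

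First I would record the two expansions and check that each evaluates to $\alpha$. The short one, $(0, b, 2\mu, -1)_{\tau} = b\tau^2 + 2\mu\tau - 1$, is legitimate because $2\mu = \pm 2 \in \mathcal{D}$, and it has Hamming weight $3$. The expansion produced by Algorithm~\ref{alg:hyperKC_GLS}, namely $(1, -\mu, b, 0, 3)_{\tau}$, has Hamming weight $4$; to confirm it equals $\alpha$ I would substitute the characteristic relation $\tau^4 = \mu\tau^3 + 2\mu\tau - 4$ coming from \eqref{hyperKC_cha_eqn}, which collapses $\tau^4 - \mu\tau^3 + b\tau^2 + 3$ to $b\tau^2 + 2\mu\tau - 1$. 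I would also verify that both digit strings genuinely qualify as GLS $\tau$-adic expansions: every digit lies in $\mathcal{D}$, and since each string has length at most $5$, the four-consecutive-zero condition is checked on just one or two length-$4$ windows, each of which contains a zero (at position $3$ in the first string, at position $1$ in the second).

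Once these routine verifications are in place, the conclusion is immediate: the GLS $\tau$-adic expansion $(1, -\mu, b, 0, 3)_{\tau}$ of $\alpha$ returned by the algorithm has Hamming weight $4$, strictly larger than the weight $3$ of the competing Frobenius expansion $(0, b, 2\mu, -1)_{\tau}$ with digits in $\mathcal{D}$. Hence the GLS $\tau$-adic expansion is not of minimal Hamming weight among all Frobenius expansions over $\mathcal{D}$, which is exactly the failure of minimality asserted. I do not anticipate a genuine obstacle here; the only point deserving care is the weight bookkeeping, i.e.\ ensuring $b \neq 0$ and noting that $2\mu = \pm 2 \neq 0$ always holds since $\mu = \pm 1$, so that the gap $4$ versus $3$ is real and not an artifact of a coincidental cancellation in the digits.
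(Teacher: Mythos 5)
Your proposal is correct and follows essentially the same route as the paper: the same witness $\alpha = b\tau^{2} + 2\mu\tau - 1$ and the same pair of expansions $(0, b, 2\mu, -1)_{\tau}$ and $(1, -\mu, b, 0, 3)_{\tau}$, with the weight comparison $3$ versus $4$. The extra verifications you supply (the substitution via $\tau^{4} = \mu\tau^{3} + 2\mu\tau - 4$ and the digit/weight bookkeeping with $b \neq 0$) are details the paper leaves implicit, relying on its preceding non-uniqueness argument.
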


\begin{proof}
We give an example that is an element $\alpha \in \mathbb{Z}[\tau]$ 
which does not have minimal Hamming weight. 
Let us take $\alpha = b{\tau}^{2} + 2\mu{\tau} - 1 \in \mathbb{Z}[\tau]$ again. 
From $\alpha = (0, b, 2\mu, -1)_{\tau} = (1, -\mu, b, 0, 3)_{\tau}$, 
GLS ${\tau}$-adic expansion does not have minimal Hamming weight 
among all Frobenius expansion with digit set $\mathcal{D}$. 
\end{proof}


\section{Properties of $\tau$-NAF}
\label{sec:hyperKC_prop_tnaf}

In this section, we investigate the three properties of 
$\tau$-NAF on hyperelliptic Koblitz curves. 
In \cite[p.19]{GLS00}, \cite[p.115]{GLS01}, 
the authors proposed to use the digit set 
$\widetilde{\mathcal{D}} = %
\{ 0, \pm{1}, \pm{2}, \pm{(1 + {\tau})}, %
\pm{(1 - {\tau})}, \pm{(1 - 2{\tau})}, \pm{2} + {\tau} \}$ 
to construct $\tau$-NAF. 
However, it seems that 
$\widetilde{\mathcal{D}}$ is a digit set specifically for the case $a = 1$. 
We start with determining possible digit sets so that 
one can achieve sparse $\tau$-adic expansions. 

\subsection{Possible digit sets}
\label{subsec:hyperKC_tnaf_digitset} 

Let $\alpha = s + t{\tau} + u{\tau}^{2} %
+ v{\tau}^{3} \in {\mathbb{Z}[\tau]}$ 
($s, t, u, v \in \mathbb{Z}$). 
According to the discussion 
in \cite[pp.7--8]{GLS00}, \cite[p.111]{GLS01}, 
we put $s = 8{Q_s} + {R_s}$ ($0 \leq {R_s} \leq 7$) 
and $t = 4{Q_t} + {R_t}$ ($0 \leq {R_t} \leq 3$). 

In order to realize a sparce ${\tau}$-adic expansion, 
we would like to find $c \in \mathbb{Z}[\tau]$ 
such that ${\tau} \mid (\alpha - c)$ when $\tau \mid \alpha$ and 
${\tau}^{2} \mid (\alpha - c)$ when $\tau \centernot\mid \alpha$. 
As in the case of the GLS ${\tau}$-adic expansions, 
if $\tau \mid \alpha$ then we choose $c = 0$. 
Suppose ${\tau} \centernot\mid {\alpha}$. 
We first derive two necessity conditions on ${R_s}$ and ${R_t}$ 
so that ${\alpha} - c$ is divisible by ${\tau}^{2}$. 
From ${\tau} \centernot\mid {\alpha}$, 
we have $1 \leq {R_s} \leq 7$, ${R_s} \neq 4$ 
and $1 \leq {R_t} \leq 3$. 
By Lemma~\ref{lem:hyperKC_div}, 
$({R_s} - c^{\prime})$ is divisible by $4$. 

\begin{description}
\item[(Condition~1)] 
$({R_s} - c^{\prime})$ is divisible by $4$. 
\end{description}

We denote $({R_s} - c^{\prime})/4$ by $\tilde{c}$. 
Then 
\begin{eqnarray*}
\alpha - c 
& = & 
{R_s} + (4({\mu}{Q_s} + {Q_t}) + {R_t}){\tau} %
+ u{\tau}^{2} + (v + 2{\mu}{Q_s}){\tau}^{3} %
- 2{Q_s}{\tau}^{4} - c \nonumber \\%
& = &
({R_s} - c^{\prime}) + (4({\mu}{Q_s} + {Q_t}) + ({R_t} - c^{\prime\prime})){\tau} %
+ u{\tau}^{2} + (v + 2{\mu}{Q_s}){\tau}^{3} %
- 2{Q_s}{\tau}^{4} \nonumber \\%
& = &
2{\mu}\tilde{c}{\tau} + {\mu}\tilde{c}{\tau}^{3} - \tilde{c}{\tau}^{4} %
+ (4({\mu}{Q_s} + {Q_t}) + ({R_t} - c^{\prime\prime})){\tau} \\%
&&\hspace{12em} %
+ u{\tau}^{2} + (v + 2{\mu}{Q_s}){\tau}^{3} %
- 2{Q_s}{\tau}^{4} \nonumber \\%
& = &
(4({\mu}{Q_s} + {Q_t}) + ({R_t} - c^{\prime\prime} + 2{\mu}\tilde{c})){\tau} \\%
&&\hspace{12em} %
+ u{\tau}^{2} + (v + 2{\mu}{Q_s} + {\mu}\tilde{c}){\tau}^{3} %
- (2{Q_s} + \tilde{c}){\tau}^{4}. %
\end{eqnarray*}
Now once again from Lemma~\ref{lem:hyperKC_div}, it follows that 
$({R_t} - c^{\prime\prime} + 2{\mu}\tilde{c})$ is divided by $4$. 

\begin{description}
\item[(Condition~2)] 
$({R_t} - c^{\prime\prime} + 2{\mu}\tilde{c})$ is divisible by $4$. 
\end{description}

Remark that each element $c \in \widetilde{\mathcal{D}}$ 
has the form $c^{\prime} + c^{\prime\prime}{\tau}$, 
where $-2 \leq c^{\prime}, c^{\prime\prime} \leq 2$ 
and $(|c^{\prime}|, |c^{\prime\prime}|) \neq (2, 2)$. 
So we assume that $c = c^{\prime} + c^{\prime\prime}{\tau} \in \mathbb{Z}[\tau]$ 
is satisfied the following condition: 

\begin{description}
\item[(Condition~3)] 
$-2 \leq c^{\prime}, c^{\prime\prime} \leq 2$ 
and $(|c^{\prime}|, |c^{\prime\prime}|) \neq (2, 2)$. 
\end{description}

The table below provides how to choose 
$c = c^{\prime} + c^{\prime\prime}{\tau}$ 
so that $(\alpha - c)$ is divisible by ${\tau}^{2}$. 

\begin{table}[htbp]
\begin{center}
\begin{tabular}{|c|c|c|c|c|c|c|}\hline 
\backslashbox{${R_t}$}{${R_s}$} & %
$1$ & $2$ & $3$ & $5$ & $6$ & $7$ \\\hline\hline
 & %
 &  & $-1 + 2{\mu}{\tau}$ & $1 + 2{\mu}{\tau}$ &  &  \\
$0$ & %
$1$ & $2$ & or & or & $-2$ & $-1$ \\
 & %
 &  & $-1 - 2{\mu}{\tau}$ & $1 - 2{\mu}{\tau}$ &  &  \\
\hline
 & %
 & $2 + {\tau}$ &  &  & $-2 + {\tau}$ &  \\
$1$ & %
$1 + {\tau}$ & or & $-1 - {\tau}$ & $1 - {\tau}$ & or & $-1 + {\tau}$ \\
 & %
 & $-2 - {\tau}$ &  &  & $2 - {\tau}$ &  \\
\hline
 & %
$1 + 2{\tau}$ &  &  &  &  & $-1 + 2{\tau}$ \\
$2$ & %
or & $-2$ & $-1$ & $1$ & $2$ & or \\
 & %
$1 - 2{\tau}$ &  &  &  &  & $-1 - 2{\tau}$ \\
\hline
 & %
 & $-2 + {\tau}$ &  &  & $2 + {\tau}$ &  \\
$3$ & %
$1 - {\tau}$ & or & $-1 + {\tau}$ & $1 + {\tau}$ & or & $-1 - {\tau}$ \\
 & %
 & $2 - {\tau}$ &  &  & $-2 - {\tau}$ &  \\
\hline
\end{tabular}
\end{center}
\end{table}

Here, in order to avoid a computational overhead 
in the precomputation for scalar multiplication, 
we impose the following additional condition: 

\begin{description}
\item[(Condition~4)] 
In cases that there exist two candidates 
$c = c^{\prime} + c^{\prime\prime}{\tau}$ such that 
$\tau \mid (\alpha - c)$ for two different pairs of $({R_s}, {R_t})$, 
only one element of the two candidates is included in a digit set. 
\end{description}

We are now in a position to clarify the digit sets so that 
one may achieve sparse ${\tau}$-adic expansions. 
We put 
\[
\widetilde{\mathcal{D}_{0}} = \{ 0, \pm{1}, \pm{2}, 1 \pm{\tau}, -1 \pm{\tau} \}, 
\]
\[
\widetilde{\mathcal{D}_{1}^{j_1}} = \bigl{\{} (-1)^{\frac{{j_1} - {j_1} \bmod 2}{2}}(2 + {\tau}), %
(-1)^{j_1}(2 - {\tau}) \bigr{\}} \ (0 \leq {j_1} \leq 3), 
\]
\[
\widetilde{\mathcal{D}_{2}^{j_2}} = \bigl{\{} 1 + 2{\mu}(-1)^{\frac{{j_2} - {j_2} \bmod 2}{2}}{\tau}, -1 + 2{\mu}(-1)^{j_2}{\tau} \bigr{\}} \ (0 \leq {j_2} \leq 3), 
\]
respectively. 
Then 
we obtain the following $16$ possible digit sets: 
\[
\widetilde{\mathcal{D}_{j}} := \widetilde{\mathcal{D}_{0}} %
\cup \widetilde{\mathcal{D}_{1}^{j_1}} %
\cup \widetilde{\mathcal{D}_{2}^{j_2}} %
\quad (j := 4{j_1} + {j_2} + 1, 0 \leq {j_1}, {j_2} \leq 3). 
\]
Remark that 
$\widetilde{\mathcal{D}_{7}} = \widetilde{\mathcal{D}}$ 
if $\mu = 1$. 
Our procedure to construct ${\tau}$-NAF 
is same as \cite{GLS00}, \cite{GLS01}. 
First we fix a digit set $\widetilde{\mathcal{D}_{j}}$. 
Set ${\alpha}_{i} := {\alpha} = s + t{\tau} + u{\tau}^{2} %
+ v{\tau}^{3} \in {\mathbb{Z}[\tau]}$ 
($s, t, u, v \in \mathbb{Z}$). The initial index $i$ is zero. 
We choose a $c$ according to the above table and 
${\alpha}_{i+1} := ({\alpha}_{i} - c)/{\tau}$, $i:=i + 1$. 

Repeating the process until ${\alpha}_{i}$ will be zero for some $i$ 
may yields a sparse ${\tau}$-adic expansion. 

\subsection{Properties}
\label{subsec:hyperKC_tnaf_properties} 

Indeed, we can prove that the process will terminate 
after a finite number of iterations. 
Namely, one can construct $\tau$-NAF for each digit set 
$\widetilde{\mathcal{D}_{j}}$ ($j = 1, \ldots, 16$). 

\begin{thm} 
\label{thm:hyperKC_tnaf_existence}
{\rm\bfseries 
[Existence of ${\tau}$-NAF]}\ 
For each $j$ $(j = 1, \ldots, 16)$, 
every $\alpha \in \mathbb{Z}[\tau]$ has 
a ${\tau}$-NAF with respect to 
the digit set $\widetilde{\mathcal{D}_{j}}$. 
\end{thm}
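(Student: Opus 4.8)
The plan is to follow verbatim the template of the proof of Theorem~\ref{thm:hyperKC_GLS_existence}: first show that the single reduction step of the $\tau$-NAF algorithm is norm-decreasing once $\mathcal{N}(\alpha)$ is large enough, so that an arbitrary $\alpha$ is carried after finitely many steps to an element $\alpha'$ of bounded norm; then settle the finitely many small-norm cases by the Fincke--Pohst enumeration together with a direct verification that the algorithm reaches $0$. The only structural simplification over the GLS case is that here a nonzero digit is always followed by a \emph{forced} zero, so the relevant jump length is $j_0 \in \{1,2\}$ rather than $\{1,2,3,4\}$.

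For the reduction step, fix a digit set $\widetilde{\mathcal{D}_j}$, write $\alpha_{i+1} := (\alpha_i - c)/\tau$, and split according to the construction of subsection~\ref{subsec:hyperKC_tnaf_digitset}. If $\tau \mid \alpha_i$ I take $c = 0$; since every root $\tau_k$ of~\eqref{hyperKC_cha_eqn} has $|\tau_k| = \sqrt{2}$ (all Frobenius eigenvalues of a curve over $\mathbb{F}_2$ have absolute value $\sqrt{2}$), this gives $\mathcal{N}(\alpha_{i+1}) = \mathcal{N}(\alpha_i)/\sqrt{2} < \mathcal{N}(\alpha_i)$ for $\alpha_i \neq 0$, exactly as in Theorem~\ref{thm:hyperKC_GLS_existence}. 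If $\tau \centernot\mid \alpha_i$, the choice table (through Conditions~1 and~2, i.e. Lemma~\ref{lem:hyperKC_div}) supplies a nonzero $c \in \widetilde{\mathcal{D}_j}$ with $\tau^2 \mid (\alpha_i - c)$; then $c_{i+1}$ is forced to be $0$, so the NAF condition $c_{i+1}c_i = 0$ is automatic, and $\alpha_{i+2} = (\alpha_i - c)/\tau^2$ satisfies $\mathcal{N}(\alpha_{i+2}) = \mathcal{N}(\alpha_i - c)/2 \le (\mathcal{N}(\alpha_i) + \mathcal{N}(c))/2$ by the triangle inequality, which is strictly below $\mathcal{N}(\alpha_i)$ whenever $\mathcal{N}(\alpha_i) > \mathcal{N}(c)$.

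The one numerical input needed is the maximal digit norm. Using $|\tau_k| = \sqrt{2}$ together with $\tau_1 + \tau_2 + \bar\tau_1 + \bar\tau_2 = \mu$, I would compute $\mathcal{N}(c)^2$ over all $c \in \widetilde{\mathcal{D}_j}$; the largest value occurring in any of the $16$ digit sets and for both $\mu = \pm 1$ is $\mathcal{N}(c)^2 = 20$, attained by $\pm(1+2\tau)$ or $\pm(1-2\tau)$ depending on $\mu$ (e.g. $\mathcal{N}(1+2\tau)^2 = 18 + 2\mu$). Hence, with the uniform threshold $C := 2\sqrt{5}$, the two cases above yield a strict decrease $\mathcal{N}(\alpha_{i+j_0})^2 < \mathcal{N}(\alpha_i)^2$ for some $j_0 \in \{1,2\}$ as soon as $\mathcal{N}(\alpha_i)^2 > 20$. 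Because $\mathcal{N}(\alpha)^2 \in \mathbb{Z}_{\ge 0}$, iterating this writes $\alpha$ as a valid $\tau$-NAF prefix plus $\alpha'\tau^{\ell'}$ with $\mathcal{N}(\alpha')^2 \le 20$.

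It then remains to enumerate $\{\alpha' \in \mathbb{Z}[\tau] : \mathcal{N}(\alpha')^2 \le 20\}$ by Fincke--Pohst (a subset of the $300$ elements already tabulated) and, for each listed element and each of the $16$ digit sets, to run the algorithm and confirm it reaches $0$; termination on all of $\mathbb{Z}[\tau]$ then follows by strong induction on the nonnegative integer $\mathcal{N}(\cdot)^2$. The main obstacle I anticipate is the bookkeeping of this last finite verification: unlike the single GLS table, the digit chosen at each step now depends on $j$, so a priori one must certify termination for all $16$ digit sets. I expect the symmetry of the digit sets under $\tau \mapsto -\tau$ and $c \mapsto -c$ to collapse most of these, but the delicate point to pin down is that the choice table of subsection~\ref{subsec:hyperKC_tnaf_digitset} indeed always provides a $c \in \widetilde{\mathcal{D}_j}$ with $\tau^2 \mid (\alpha - c)$ for every residue pair $(R_s, R_t)$ with $\tau \centernot\mid \alpha$, so that the reduction never gets stuck.
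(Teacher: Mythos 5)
Your proposal follows essentially the same route as the paper's proof: the identical norm-decrease argument (a factor $1/\sqrt{2}$ per division by $\tau$ plus the triangle inequality, giving a strict decrease over $j_0\in\{1,2\}$ steps once $\mathcal{N}(\alpha_i)>2\sqrt{5}$, with the same maximal digit norm $\mathcal{N}(c)^2=20$ attained at $\pm 1+2\mu\tau$), followed by the same Fincke--Pohst enumeration of the $94$ elements with $\mathcal{N}(\alpha')^2\le 20$ and a finite termination check for each of the $16$ digit sets. The ``delicate point'' you flag --- that the choice table supplies a digit in $\widetilde{\mathcal{D}_j}$ for every residue pair $(R_s,R_t)$ with $\tau\centernot\mid\alpha$ --- is exactly what Conditions~1--4 and the table in Subsection~\ref{subsec:hyperKC_tnaf_digitset} establish, so your argument is complete and matches the paper's.
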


\begin{proof}
We keep $j$ ($1 \leq j \leq 16$) fixed and 
consider the digit set $\widetilde{\mathcal{D}_{j}}$. 
Let us take ${\alpha}_{0} := {\alpha} = s + t{\tau} + u{\tau}^{2} %
+ v{\tau}^{3} \in {\mathbb{Z}[\tau]}$ 
($s, t, u, v \in \mathbb{Z}$). 
First, we claim that there exist 
${\alpha}^{\prime} \in {\mathbb{Z}[\tau]}$, 
${\ell}^{\prime} \in \mathbb{Z}_{>0}$ and 
$c_{0}, c_{1}, \ldots, c_{{\ell}^{\prime} - 1} \in \widetilde{\mathcal{D}_{j}}$ 
such that $\mathcal{N}({\alpha}^{\prime}) \leq 2\sqrt{5}$ and 
\[
\alpha = \sum_{j=0}^{{\ell}^{\prime}-1} {c_{j}} + {\alpha}^{\prime}{\tau}^{{\ell}^{\prime}}. %
\]
Notice that 
$\max \{ \mathcal{N}(\alpha) \mid \alpha %
\in \widetilde{\mathcal{D}_{j}}, %
j = 1, \ldots, 16 \} = 2\sqrt{5}$ 
(the equality holds when 
$\alpha = \pm{1} + 2{\mu}{\tau}$). 
We put ${\alpha}_{i + 1} := ({\alpha}_{i} - {c_i})/{\tau}$ 
(${c_i} \in \widetilde{\mathcal{D}_{j}}$). 
If ${c_i} = 0$, it always satisfies that 
$\mathcal{N}({\alpha}_{i + 1}) \leq \mathcal{N}({\alpha}_{i})/\sqrt{2} %
< \mathcal{N}({\alpha}_{i})$ for ${\alpha}_{i} \neq 0$. 
Otherwise, a simple calculation shows 
$\mathcal{N}({\alpha}_{i + 2}) < \mathcal{N}({\alpha}_{i})$ 
for $\mathcal{N}({\alpha}_{i}) > 2\sqrt{5}$ 
because $\mathcal{N}({\alpha}_{i + 2}) \leq %
\mathcal{N}({\alpha}_{i})/2 + \sqrt{5} < \mathcal{N}({\alpha}_{i})$. 
Thus for ${\alpha}_{i} \in \mathbb{Z}[\tau]$ 
with $\mathcal{N}({\alpha}_{i}) > 2\sqrt{5}$, 
there exists ${j_0} \in \{1, 2 \}$ such that 
$\mathcal{N}({\alpha}_{i + {j_0}}) < \mathcal{N}({\alpha}_{i})$. 
Remark that for any $\alpha \in \mathbb{Z}[\tau]$, we have 
$\mathcal{N}({\alpha})^{2} \in \mathbb{Z}$. 
Then there exists ${i_0} \in \mathbb{Z}$ such that 
$\mathcal{N}({\alpha}_{i_0}) < 2\sqrt{5}$. 
By taking ${\ell}^{\prime} \in \mathbb{Z}$ 
be the minimal such ${i_0}$ 
and ${\alpha}^{\prime} := {\alpha}_{{\ell}^{\prime}}$, 
the first assertion follows. 

The only remaining issues are to find all elements 
${\alpha}^{\prime} \in {\mathbb{Z}[\tau]}$ 
satisfying $\mathcal{N}({\alpha}^{\prime}) \leq 2\sqrt{5}$, 
namely $\mathcal{N}({\alpha}^{\prime})^{2} \leq 20$ 
and to show the finiteness of the ${\tau}$-NAF for each element 
which is found by the search. 
As in the proof of Theorem~\ref{thm:hyperKC_GLS_existence}, 
we use the Fincke-Pohst algorithm to find them. 

\begin{center}
\begin{longtable}[c]{|c||c|c|c|c|}
\caption{Elements $\alpha^{\prime} \in \mathbb{Z}[{\tau}]$ with $\mathcal{N}(\alpha^{\prime})^{2} \leq 20$ 
and their ${\tau}$-NAF ($\widetilde{\mathcal{D}_{1}}$, $\mu = 1$). }
\\
\hline
\label{table:hyperKC_tnaf_existence_D1_mu=1}
${\#}$ & ${\alpha^{\prime}} = s + t{\tau} + u{\tau}^{2} + v{\tau}^{3}$ & %
$\mathcal{N}(\alpha^{\prime})^{2}$ & ${\tau}$-NAF of $\alpha^{\prime}$ & $\ell(\alpha^{\prime})$ \\
\hline\hline
\endfirsthead
\hline
${\#}$ & ${\alpha^{\prime}} = s + t{\tau} + u{\tau}^{2} + v{\tau}^{3}$ & %
$\mathcal{N}(\alpha^{\prime})^{2}$ & ${\tau}$-NAF of $\alpha^{\prime}$ & $\ell(\alpha^{\prime})$ \\
\hline\hline
\endhead
\hline
\endfoot
\hline
\endlastfoot
\hline
1 & $2 - {\tau}^{2} - {\tau}^{3}$ & 
20 & 
$(-1-\tau, 0, 2)_{\tau}$ & 3 \\\hline
2 & $-2 + {\tau}^{2} + {\tau}^{3}$ & 
20 & 
$(1+\tau, 0, -2)_{\tau}$ & 3 \\\hline
3 & $1 - {\tau} - {\tau}^{3}$ & 
16 & 
$(-1, 0, 0, 1-\tau)_{\tau}$ & 4 \\\hline
4 & $-1 + {\tau} + {\tau}^{3}$ & 
16 & 
$(1, 0, 0, -1+\tau)_{\tau}$ & 4 \\\hline
5 & $2 - {\tau} - {\tau}^{3}$ & 
14 & 
$(-1, 0, 0, 2-\tau)_{\tau}$ & 4 \\\hline
6 & $-2 + {\tau} + {\tau}^{3}$ & 
14 & 
$(1, 0, 0, 0, 2-\tau)_{\tau}$ & 5 \\\hline
7 & $3 - {\tau} - {\tau}^{3}$ & 
16 & 
$(-1, 0, 0, 0, -1+\tau)_{\tau}$ & 5 \\\hline
8 & $-3 + {\tau} + {\tau}^{3}$ & 
16 & 
$(1, 0, 0, 0, 1-\tau)_{\tau}$ & 5 \\\hline
9 & $ - {\tau}^{3}$ & 
16 & 
$(-1, 0, 0, 0)_{\tau}$ & 4 \\\hline
10 & $ + {\tau}^{3}$ & 
16 & 
$(1, 0, 0, 0)_{\tau}$ & 4 \\\hline
11 & $1 - {\tau}^{3}$ & 
11 & 
$(-1, 0, 0, 1)_{\tau}$ & 4 \\\hline
12 & $-1 + {\tau}^{3}$ & 
11 & 
$(1, 0, 0, -1)_{\tau}$ & 4 \\\hline
13 & $2 - {\tau}^{3}$ & 
10 & 
$(-1, 0, 0, 2)_{\tau}$ & 4 \\\hline
14 & $-2 + {\tau}^{3}$ & 
10 & 
$(1, 0, 0, -2)_{\tau}$ & 4 \\\hline
15 & $3 - {\tau}^{3}$ & 
13 & 
$(-1, 0, 0, 0, -1+2\tau)_{\tau}$ & 5 \\\hline
16 & $-3 + {\tau}^{3}$ & 
13 & 
$(1, 0, 0, -2, 0, 1+2\tau)_{\tau}$ & 6 \\\hline
17 & $4 - {\tau}^{3}$ & 
20 & 
$(-1, 0, 0, 2, 0)_{\tau}$ & 5 \\\hline
18 & $-4 + {\tau}^{3}$ & 
20 & 
$(1, 0, 0, -2, 0)_{\tau}$ & 5 \\\hline
19 & $ + {\tau} - {\tau}^{3}$ & 
18 & 
$(-1, 0, 1, 0)_{\tau}$ & 4 \\\hline
20 & $ - {\tau} + {\tau}^{3}$ & 
18 & 
$(1, 0, -1, 0)_{\tau}$ & 4 \\\hline
21 & $1 + {\tau} - {\tau}^{3}$ & 
14 & 
$(-1, 0, 0, 1+\tau)_{\tau}$ & 4 \\\hline
22 & $-1 - {\tau} + {\tau}^{3}$ & 
14 & 
$(1, 0, 0, -1-\tau)_{\tau}$ & 4 \\\hline
23 & $2 + {\tau} - {\tau}^{3}$ & 
14 & 
$(-1, 0, 0, 2+\tau)_{\tau}$ & 4 \\\hline
24 & $-2 - {\tau} + {\tau}^{3}$ & 
14 & 
$(1, 0, 0, -2, 0, 2+\tau)_{\tau}$ & 6 \\\hline
25 & $3 + {\tau} - {\tau}^{3}$ & 
18 & 
$(-1, 0, 0, 2, 0, -1-\tau)_{\tau}$ & 6 \\\hline
26 & $-3 - {\tau} + {\tau}^{3}$ & 
18 & 
$(1, 0, 0, -2, 0, 1+\tau)_{\tau}$ & 6 \\\hline
27 & $1 - {\tau} + {\tau}^{2} - {\tau}^{3}$ & 
19 & 
$(1-\tau, 0, 1-\tau)_{\tau}$ & 3 \\\hline
28 & $-1 + {\tau} - {\tau}^{2} + {\tau}^{3}$ & 
19 & 
$(-1+\tau, 0, -1+\tau)_{\tau}$ & 3 \\\hline
29 & $2 - {\tau} + {\tau}^{2} - {\tau}^{3}$ & 
18 & 
$(1-\tau, 0, 2-\tau)_{\tau}$ & 3 \\\hline
30 & $-2 + {\tau} - {\tau}^{2} + {\tau}^{3}$ & 
18 & 
$(1, 0, -1, 0, 2-\tau)_{\tau}$ & 5 \\\hline
31 & $ + {\tau}^{2} - {\tau}^{3}$ & 
20 & 
$(1-\tau, 0, 0)_{\tau}$ & 3 \\\hline
32 & $ - {\tau}^{2} + {\tau}^{3}$ & 
20 & 
$(-1+\tau, 0, 0)_{\tau}$ & 3 \\\hline
33 & $1 + {\tau}^{2} - {\tau}^{3}$ & 
16 & 
$(1-\tau, 0, 1)_{\tau}$ & 3 \\\hline
34 & $-1 - {\tau}^{2} + {\tau}^{3}$ & 
16 & 
$(-1+\tau, 0, -1)_{\tau}$ & 3 \\\hline
35 & $2 + {\tau}^{2} - {\tau}^{3}$ & 
16 & 
$(1-\tau, 0, 2)_{\tau}$ & 3 \\\hline
36 & $-2 - {\tau}^{2} + {\tau}^{3}$ & 
16 & 
$(-1+\tau, 0, -2)_{\tau}$ & 3 \\\hline
37 & $3 + {\tau}^{2} - {\tau}^{3}$ & 
20 & 
$(-1, 0, 1, 0, -1+2\tau)_{\tau}$ & 5 \\\hline
38 & $-3 - {\tau}^{2} + {\tau}^{3}$ & 
20 & 
$(1, 0, 0, -2, 0, 1+2\tau, 0, 1+2\tau)_{\tau}$ & 8 \\\hline
39 & $-1 - {\tau} - {\tau}^{2}$ & 
18 & 
$(-1, 0, -1-\tau)_{\tau}$ & 3 \\\hline
40 & $1 + {\tau} + {\tau}^{2}$ & 
18 & 
$(1, 0, 1+\tau)_{\tau}$ & 3 \\\hline
41 & $ - {\tau} - {\tau}^{2}$ & 
14 & 
$(-1-\tau, 0)_{\tau}$ & 2 \\\hline
42 & $ + {\tau} + {\tau}^{2}$ & 
14 & 
$(1+\tau, 0)_{\tau}$ & 2 \\\hline
43 & $1 - {\tau} - {\tau}^{2}$ & 
14 & 
$(-1, 0, 1-\tau)_{\tau}$ & 3 \\\hline
44 & $-1 + {\tau} + {\tau}^{2}$ & 
14 & 
$(1, 0, -1+\tau)_{\tau}$ & 3 \\\hline
45 & $2 - {\tau} - {\tau}^{2}$ & 
18 & 
$(-1, 0, 2-\tau)_{\tau}$ & 3 \\\hline
46 & $-2 + {\tau} + {\tau}^{2}$ & 
18 & 
$(1, 0, 1-\tau, 0, 2-\tau)_{\tau}$ & 5 \\\hline
47 & $-2 - {\tau}^{2}$ & 
18 & 
$(-1, 0, -2)_{\tau}$ & 3 \\\hline
48 & $2 + {\tau}^{2}$ & 
18 & 
$(1, 0, 2)_{\tau}$ & 3 \\\hline
49 & $-1 - {\tau}^{2}$ & 
11 & 
$(-1, 0, -1)_{\tau}$ & 3 \\\hline
50 & $1 + {\tau}^{2}$ & 
11 & 
$(1, 0, 1)_{\tau}$ & 3 \\\hline
51 & $ - {\tau}^{2}$ & 
8 & 
$(-1, 0, 0)_{\tau}$ & 3 \\\hline
52 & $ + {\tau}^{2}$ & 
8 & 
$(1, 0, 0)_{\tau}$ & 3 \\\hline
53 & $1 - {\tau}^{2}$ & 
9 & 
$(-1, 0, 1)_{\tau}$ & 3 \\\hline
54 & $-1 + {\tau}^{2}$ & 
9 & 
$(1, 0, -1)_{\tau}$ & 3 \\\hline
55 & $2 - {\tau}^{2}$ & 
14 & 
$(-1, 0, 2)_{\tau}$ & 3 \\\hline
56 & $-2 + {\tau}^{2}$ & 
14 & 
$(1, 0, -2)_{\tau}$ & 3 \\\hline
57 & $-2 + {\tau} - {\tau}^{2}$ & 
18 & 
$(1, 0, -1-\tau, 0, 2-\tau)_{\tau}$ & 5 \\\hline
58 & $2 - {\tau} + {\tau}^{2}$ & 
18 & 
$(1, 0, 2-\tau)_{\tau}$ & 3 \\\hline
59 & $-1 + {\tau} - {\tau}^{2}$ & 
12 & 
$(-1, 0, -1+\tau)_{\tau}$ & 3 \\\hline
60 & $1 - {\tau} + {\tau}^{2}$ & 
12 & 
$(1, 0, 1-\tau)_{\tau}$ & 3 \\\hline
61 & $ + {\tau} - {\tau}^{2}$ & 
10 & 
$(1-\tau, 0)_{\tau}$ & 2 \\\hline
62 & $ - {\tau} + {\tau}^{2}$ & 
10 & 
$(-1+\tau, 0)_{\tau}$ & 2 \\\hline
63 & $1 + {\tau} - {\tau}^{2}$ & 
12 & 
$(-1, 0, 1+\tau)_{\tau}$ & 3 \\\hline
64 & $-1 - {\tau} + {\tau}^{2}$ & 
12 & 
$(1, 0, -1-\tau)_{\tau}$ & 3 \\\hline
65 & $2 + {\tau} - {\tau}^{2}$ & 
18 & 
$(-1, 0, 2+\tau)_{\tau}$ & 3 \\\hline
66 & $-2 - {\tau} + {\tau}^{2}$ & 
18 & 
$(1, 0, 0, -1-\tau, 0, 2+\tau)_{\tau}$ & 6 \\\hline
67 & $ + 2{\tau} - {\tau}^{2}$ & 
20 & 
$(2-\tau, 0)_{\tau}$ & 2 \\\hline
68 & $ - 2{\tau} + {\tau}^{2}$ & 
20 & 
$(-1+\tau, 0, 0, 2-\tau, 0)_{\tau}$ & 5 \\\hline
69 & $-1 - 2{\tau}$ & 
20 & 
$(-1+\tau, 0, -2, 0, -1+2\tau)_{\tau}$ & 5 \\\hline
70 & $1 + 2{\tau}$ & 
20 & 
$(1+2\tau)_{\tau}$ & 1 \\\hline
71 & $ - 2{\tau}$ & 
16 & 
$(-2, 0)_{\tau}$ & 2 \\\hline
72 & $ + 2{\tau}$ & 
16 & 
$(2, 0)_{\tau}$ & 2 \\\hline
73 & $1 - 2{\tau}$ & 
16 & 
$(-1+\tau, 0, -2, 0, 1+2\tau)_{\tau}$ & 5 \\\hline
74 & $-1 + 2{\tau}$ & 
16 & 
$(-1+2\tau)_{\tau}$ & 1 \\\hline
75 & $2 - 2{\tau}$ & 
20 & 
$(1-\tau, 0, 0, -2)_{\tau}$ & 4 \\\hline
76 & $-2 + 2{\tau}$ & 
20 & 
$(-1+\tau, 0, 0, 2)_{\tau}$ & 4 \\\hline
77 & $-2 - {\tau}$ & 
14 & 
$(1, 0, 0, -2, 0, 2+\tau, 0, 2+\tau)_{\tau}$ & 8 \\\hline
78 & $2 + {\tau}$ & 
14 & 
$(2+\tau)_{\tau}$ & 1 \\\hline
79 & $-1 - {\tau}$ & 
7 & 
$(-1-\tau)_{\tau}$ & 1 \\\hline
80 & $1 + {\tau}$ & 
7 & 
$(1+\tau)_{\tau}$ & 1 \\\hline
81 & $ - {\tau}$ & 
4 & 
$(-1, 0)_{\tau}$ & 2 \\\hline
82 & $ + {\tau}$ & 
4 & 
$(1, 0)_{\tau}$ & 2 \\\hline
83 & $1 - {\tau}$ & 
5 & 
$(1-\tau)_{\tau}$ & 1 \\\hline
84 & $-1 + {\tau}$ & 
5 & 
$(-1+\tau)_{\tau}$ & 1 \\\hline
85 & $2 - {\tau}$ & 
10 & 
$(2-\tau)_{\tau}$ & 1 \\\hline
86 & $-2 + {\tau}$ & 
10 & 
$(-1+\tau, 0, 0, 2-\tau)_{\tau}$ & 4 \\\hline
87 & $3 - {\tau}$ & 
19 & 
$(1-\tau, 0, 0, -1+\tau)_{\tau}$ & 4 \\\hline
88 & $-3 + {\tau}$ & 
19 & 
$(-1+\tau, 0, 0, 1-\tau)_{\tau}$ & 4 \\\hline
89 & $-3$ & 
18 & 
$(1, 0, 0, -2, 0, 2+\tau, 0, 1+2\tau)_{\tau}$ & 8 \\\hline
90 & $3$ & 
18 & 
$(1-\tau, 0, 0, -1+2\tau)_{\tau}$ & 4 \\\hline
91 & $-2$ & 
8 & 
$(-2)_{\tau}$ & 1 \\\hline
92 & $2$ & 
8 & 
$(2)_{\tau}$ & 1 \\\hline
93 & $-1$ & 
2 & 
$(-1)_{\tau}$ & 1 \\\hline
94 & $1$ & 
2 & 
$(1)_{\tau}$ & 1 \\\hline
\end{longtable}
\end{center}

\begin{center}
\begin{longtable}[c]{|c||c|c|c|c|}
\caption{Elements $\alpha^{\prime} \in \mathbb{Z}[{\tau}]$ with $\mathcal{N}(\alpha^{\prime})^{2} \leq 20$ 
and their ${\tau}$-NAF ($\widetilde{\mathcal{D}_{1}}$, $\mu = -1$). }
\\
\hline
\label{table:hyperKC_tnaf_existence_D1_mu=-1}
${\#}$ & ${\alpha^{\prime}} = s + t{\tau} + u{\tau}^{2} + v{\tau}^{3}$ & %
$\mathcal{N}(\alpha^{\prime})^{2}$ & ${\tau}$-NAF of $\alpha^{\prime}$ & $\ell(\alpha^{\prime})$ \\
\hline\hline
\endfirsthead
\hline
${\#}$ & ${\alpha^{\prime}} = s + t{\tau} + u{\tau}^{2} + v{\tau}^{3}$ & %
$\mathcal{N}(\alpha^{\prime})^{2}$ & ${\tau}$-NAF of $\alpha^{\prime}$ & $\ell(\alpha^{\prime})$ \\
\hline\hline
\endhead
\hline
\endfoot
\hline
\endlastfoot
\hline
1 & $-2 - {\tau} - {\tau}^{2} - {\tau}^{3}$ & 
18 & 
$(1, 0, -1, 0, 2+\tau)_{\tau}$ & 5 \\\hline
2 & $2 + {\tau} + {\tau}^{2} + {\tau}^{3}$ & 
18 & 
$(1+\tau, 0, 2+\tau)_{\tau}$ & 3 \\\hline
3 & $-1 - {\tau} - {\tau}^{2} - {\tau}^{3}$ & 
19 & 
$(-1-\tau, 0, -1-\tau)_{\tau}$ & 3 \\\hline
4 & $1 + {\tau} + {\tau}^{2} + {\tau}^{3}$ & 
19 & 
$(1+\tau, 0, 1+\tau)_{\tau}$ & 3 \\\hline
5 & $-3 - {\tau}^{2} - {\tau}^{3}$ & 
20 & 
$(-1, 0, 0, -2, 0, 1-2\tau, 0, 1-2\tau)_{\tau}$ & 8 \\\hline
6 & $3 + {\tau}^{2} + {\tau}^{3}$ & 
20 & 
$(-1, 0, 1, 0, -1-2\tau)_{\tau}$ & 5 \\\hline
7 & $-2 - {\tau}^{2} - {\tau}^{3}$ & 
16 & 
$(-1-\tau, 0, -2)_{\tau}$ & 3 \\\hline
8 & $2 + {\tau}^{2} + {\tau}^{3}$ & 
16 & 
$(1+\tau, 0, 2)_{\tau}$ & 3 \\\hline
9 & $-1 - {\tau}^{2} - {\tau}^{3}$ & 
16 & 
$(-1-\tau, 0, -1)_{\tau}$ & 3 \\\hline
10 & $1 + {\tau}^{2} + {\tau}^{3}$ & 
16 & 
$(1+\tau, 0, 1)_{\tau}$ & 3 \\\hline
11 & $ - {\tau}^{2} - {\tau}^{3}$ & 
20 & 
$(-1-\tau, 0, 0)_{\tau}$ & 3 \\\hline
12 & $ + {\tau}^{2} + {\tau}^{3}$ & 
20 & 
$(1+\tau, 0, 0)_{\tau}$ & 3 \\\hline
13 & $-3 - {\tau} - {\tau}^{3}$ & 
16 & 
$(1, 0, 0, 0, 1+\tau)_{\tau}$ & 5 \\\hline
14 & $3 + {\tau} + {\tau}^{3}$ & 
16 & 
$(-1, 0, 0, 0, -1-\tau)_{\tau}$ & 5 \\\hline
15 & $-2 - {\tau} - {\tau}^{3}$ & 
14 & 
$(1, 0, 0, 0, 2+\tau)_{\tau}$ & 5 \\\hline
16 & $2 + {\tau} + {\tau}^{3}$ & 
14 & 
$(1, 0, 0, 2+\tau)_{\tau}$ & 4 \\\hline
17 & $-1 - {\tau} - {\tau}^{3}$ & 
16 & 
$(-1, 0, 0, -1-\tau)_{\tau}$ & 4 \\\hline
18 & $1 + {\tau} + {\tau}^{3}$ & 
16 & 
$(1, 0, 0, 1+\tau)_{\tau}$ & 4 \\\hline
19 & $-4 - {\tau}^{3}$ & 
20 & 
$(1, 0, 0, 2, 0)_{\tau}$ & 5 \\\hline
20 & $4 + {\tau}^{3}$ & 
20 & 
$(-1, 0, 0, -2, 0)_{\tau}$ & 5 \\\hline
21 & $-3 - {\tau}^{3}$ & 
13 & 
$(-1, 0, 0, -2, 0, 1-2\tau)_{\tau}$ & 6 \\\hline
22 & $3 + {\tau}^{3}$ & 
13 & 
$(-1, 0, 0, 0, -1-2\tau)_{\tau}$ & 5 \\\hline
23 & $-2 - {\tau}^{3}$ & 
10 & 
$(-1, 0, 0, -2)_{\tau}$ & 4 \\\hline
24 & $2 + {\tau}^{3}$ & 
10 & 
$(1, 0, 0, 2)_{\tau}$ & 4 \\\hline
25 & $-1 - {\tau}^{3}$ & 
11 & 
$(-1, 0, 0, -1)_{\tau}$ & 4 \\\hline
26 & $1 + {\tau}^{3}$ & 
11 & 
$(1, 0, 0, 1)_{\tau}$ & 4 \\\hline
27 & $ - {\tau}^{3}$ & 
16 & 
$(-1, 0, 0, 0)_{\tau}$ & 4 \\\hline
28 & $ + {\tau}^{3}$ & 
16 & 
$(1, 0, 0, 0)_{\tau}$ & 4 \\\hline
29 & $-3 + {\tau} - {\tau}^{3}$ & 
18 & 
$(-1, 0, 0, -2, 0, 1-\tau)_{\tau}$ & 6 \\\hline
30 & $3 - {\tau} + {\tau}^{3}$ & 
18 & 
$(1, 0, 0, 2, 0, -1+\tau)_{\tau}$ & 6 \\\hline
31 & $-2 + {\tau} - {\tau}^{3}$ & 
14 & 
$(-1, 0, 0, -2, 0, 2-\tau)_{\tau}$ & 6 \\\hline
32 & $2 - {\tau} + {\tau}^{3}$ & 
14 & 
$(1, 0, 0, 2-\tau)_{\tau}$ & 4 \\\hline
33 & $-1 + {\tau} - {\tau}^{3}$ & 
14 & 
$(-1, 0, 0, -1+\tau)_{\tau}$ & 4 \\\hline
34 & $1 - {\tau} + {\tau}^{3}$ & 
14 & 
$(1, 0, 0, 1-\tau)_{\tau}$ & 4 \\\hline
35 & $ + {\tau} - {\tau}^{3}$ & 
18 & 
$(-1, 0, 1, 0)_{\tau}$ & 4 \\\hline
36 & $ - {\tau} + {\tau}^{3}$ & 
18 & 
$(1, 0, -1, 0)_{\tau}$ & 4 \\\hline
37 & $-2 + {\tau}^{2} - {\tau}^{3}$ & 
20 & 
$(1-\tau, 0, -2)_{\tau}$ & 3 \\\hline
38 & $2 - {\tau}^{2} + {\tau}^{3}$ & 
20 & 
$(-1+\tau, 0, 2)_{\tau}$ & 3 \\\hline
39 & $ - 2{\tau} - {\tau}^{2}$ & 
20 & 
$(1+\tau, 0, 0, 2+\tau, 0)_{\tau}$ & 5 \\\hline
40 & $ + 2{\tau} + {\tau}^{2}$ & 
20 & 
$(2+\tau, 0)_{\tau}$ & 2 \\\hline
41 & $-2 - {\tau} - {\tau}^{2}$ & 
18 & 
$(1, 0, -1+\tau, 0, 2+\tau)_{\tau}$ & 5 \\\hline
42 & $2 + {\tau} + {\tau}^{2}$ & 
18 & 
$(1, 0, 2+\tau)_{\tau}$ & 3 \\\hline
43 & $-1 - {\tau} - {\tau}^{2}$ & 
12 & 
$(-1, 0, -1-\tau)_{\tau}$ & 3 \\\hline
44 & $1 + {\tau} + {\tau}^{2}$ & 
12 & 
$(1, 0, 1+\tau)_{\tau}$ & 3 \\\hline
45 & $ - {\tau} - {\tau}^{2}$ & 
10 & 
$(-1-\tau, 0)_{\tau}$ & 2 \\\hline
46 & $ + {\tau} + {\tau}^{2}$ & 
10 & 
$(1+\tau, 0)_{\tau}$ & 2 \\\hline
47 & $1 - {\tau} - {\tau}^{2}$ & 
12 & 
$(-1, 0, 1-\tau)_{\tau}$ & 3 \\\hline
48 & $-1 + {\tau} + {\tau}^{2}$ & 
12 & 
$(1, 0, -1+\tau)_{\tau}$ & 3 \\\hline
49 & $2 - {\tau} - {\tau}^{2}$ & 
18 & 
$(-1, 0, 2-\tau)_{\tau}$ & 3 \\\hline
50 & $-2 + {\tau} + {\tau}^{2}$ & 
18 & 
$(-1, 0, 0, -1+\tau, 0, 2-\tau)_{\tau}$ & 6 \\\hline
51 & $-2 - {\tau}^{2}$ & 
18 & 
$(-1, 0, -2)_{\tau}$ & 3 \\\hline
52 & $2 + {\tau}^{2}$ & 
18 & 
$(1, 0, 2)_{\tau}$ & 3 \\\hline
53 & $-1 - {\tau}^{2}$ & 
11 & 
$(-1, 0, -1)_{\tau}$ & 3 \\\hline
54 & $1 + {\tau}^{2}$ & 
11 & 
$(1, 0, 1)_{\tau}$ & 3 \\\hline
55 & $ - {\tau}^{2}$ & 
8 & 
$(-1, 0, 0)_{\tau}$ & 3 \\\hline
56 & $ + {\tau}^{2}$ & 
8 & 
$(1, 0, 0)_{\tau}$ & 3 \\\hline
57 & $1 - {\tau}^{2}$ & 
9 & 
$(-1, 0, 1)_{\tau}$ & 3 \\\hline
58 & $-1 + {\tau}^{2}$ & 
9 & 
$(1, 0, -1)_{\tau}$ & 3 \\\hline
59 & $2 - {\tau}^{2}$ & 
14 & 
$(-1, 0, 2)_{\tau}$ & 3 \\\hline
60 & $-2 + {\tau}^{2}$ & 
14 & 
$(1, 0, -2)_{\tau}$ & 3 \\\hline
61 & $-1 + {\tau} - {\tau}^{2}$ & 
18 & 
$(-1, 0, -1+\tau)_{\tau}$ & 3 \\\hline
62 & $1 - {\tau} + {\tau}^{2}$ & 
18 & 
$(1, 0, 1-\tau)_{\tau}$ & 3 \\\hline
63 & $ + {\tau} - {\tau}^{2}$ & 
14 & 
$(1-\tau, 0)_{\tau}$ & 2 \\\hline
64 & $ - {\tau} + {\tau}^{2}$ & 
14 & 
$(-1+\tau, 0)_{\tau}$ & 2 \\\hline
65 & $1 + {\tau} - {\tau}^{2}$ & 
14 & 
$(-1, 0, 1+\tau)_{\tau}$ & 3 \\\hline
66 & $-1 - {\tau} + {\tau}^{2}$ & 
14 & 
$(1, 0, -1-\tau)_{\tau}$ & 3 \\\hline
67 & $2 + {\tau} - {\tau}^{2}$ & 
18 & 
$(-1, 0, 2+\tau)_{\tau}$ & 3 \\\hline
68 & $-2 - {\tau} + {\tau}^{2}$ & 
18 & 
$(1, 0, 1+\tau, 0, 2+\tau)_{\tau}$ & 5 \\\hline
69 & $-2 - 2{\tau}$ & 
20 & 
$(1+\tau, 0, 0, 2)_{\tau}$ & 4 \\\hline
70 & $2 + 2{\tau}$ & 
20 & 
$(-1-\tau, 0, 0, -2)_{\tau}$ & 4 \\\hline
71 & $-1 - 2{\tau}$ & 
16 & 
$(-1-2\tau)_{\tau}$ & 1 \\\hline
72 & $1 + 2{\tau}$ & 
16 & 
$(-1-\tau, 0, -2, 0, 1-2\tau)_{\tau}$ & 5 \\\hline
73 & $ - 2{\tau}$ & 
16 & 
$(-2, 0)_{\tau}$ & 2 \\\hline
74 & $ + 2{\tau}$ & 
16 & 
$(2, 0)_{\tau}$ & 2 \\\hline
75 & $1 - 2{\tau}$ & 
20 & 
$(1-2\tau)_{\tau}$ & 1 \\\hline
76 & $-1 + 2{\tau}$ & 
20 & 
$(-1-\tau, 0, -2, 0, -1-2\tau)_{\tau}$ & 5 \\\hline
77 & $-3 - {\tau}$ & 
19 & 
$(1+\tau, 0, 0, 1+\tau)_{\tau}$ & 4 \\\hline
78 & $3 + {\tau}$ & 
19 & 
$(-1-\tau, 0, 0, -1-\tau)_{\tau}$ & 4 \\\hline
79 & $-2 - {\tau}$ & 
10 & 
$(1+\tau, 0, 0, 2+\tau)_{\tau}$ & 4 \\\hline
80 & $2 + {\tau}$ & 
10 & 
$(2+\tau)_{\tau}$ & 1 \\\hline
81 & $-1 - {\tau}$ & 
5 & 
$(-1-\tau)_{\tau}$ & 1 \\\hline
82 & $1 + {\tau}$ & 
5 & 
$(1+\tau)_{\tau}$ & 1 \\\hline
83 & $ - {\tau}$ & 
4 & 
$(-1, 0)_{\tau}$ & 2 \\\hline
84 & $ + {\tau}$ & 
4 & 
$(1, 0)_{\tau}$ & 2 \\\hline
85 & $1 - {\tau}$ & 
7 & 
$(1-\tau)_{\tau}$ & 1 \\\hline
86 & $-1 + {\tau}$ & 
7 & 
$(-1+\tau)_{\tau}$ & 1 \\\hline
87 & $2 - {\tau}$ & 
14 & 
$(2-\tau)_{\tau}$ & 1 \\\hline
88 & $-2 + {\tau}$ & 
14 & 
$(-1, 0, 0, -2, 0, 2-\tau, 0, 2-\tau)_{\tau}$ & 8 \\\hline
89 & $-3$ & 
18 & 
$(-1, 0, 0, -2, 0, 2-\tau, 0, 1-2\tau)_{\tau}$ & 8 \\\hline
90 & $3$ & 
18 & 
$(-1-\tau, 0, 0, -1-2\tau)_{\tau}$ & 4 \\\hline
91 & $-2$ & 
8 & 
$(-2)_{\tau}$ & 1 \\\hline
92 & $2$ & 
8 & 
$(2)_{\tau}$ & 1 \\\hline
93 & $-1$ & 
2 & 
$(-1)_{\tau}$ & 1 \\\hline
94 & $1$ & 
2 & 
$(1)_{\tau}$ & 1 \\\hline
\end{longtable}
\end{center}

From Table~\ref{table:hyperKC_tnaf_existence_D1_mu=1} 
(or Table~\ref{table:hyperKC_tnaf_existence_D1_mu=-1}), 
there are 94 elements whose square values of norm are less than or equal to $20$. 
The ${\tau}$-NAF of ${\alpha}^{\prime}$ with 
$\mathcal{N}({\alpha}^{\prime}) \leq 20$ are also shown 
in Table~\ref{table:hyperKC_tnaf_existence_D1_mu=1} 
and Table~\ref{table:hyperKC_tnaf_existence_D1_mu=-1} 
(in the case of the digit set $\widetilde{\mathcal{D}_{1}}$). 
For the other cases, namely $j = 2, \ldots, 16$, 
similar results can be obtained 
(See Table~\ref{table:hyperKC_tnaf_existence_D2_mu=1}%
--\ref{table:hyperKC_tnaf_existence_D16_mu=-1} 
in Appendix~\ref{sec:hyperKC_append_tnaf_existence}). 
The proof is complete. 
\end{proof}

\begin{thm} 
\label{thm:hyperKC_tnaf_unique}
{\rm\bfseries 
[Uniqueness of ${\tau}$-NAF]}\ 
For each $j$ $(j = 1, \ldots, 16)$, 
it holds the uniqueness for ${\tau}$-NAF %
with respect to the digit set $\widetilde{\mathcal{D}_{j}}$. 
\end{thm}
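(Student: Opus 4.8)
The plan is to prove uniqueness by showing that in any $\tau$-NAF of a given $\alpha\in\mathbb{Z}[\tau]$ the lowest digit $c_0$ is completely forced by $\alpha$, and then to peel it off and induct. First I would record that in every expansion $\alpha=\sum_i c_i\tau^i$ one has $c_0\equiv\alpha\pmod{\tau}$, so the class of $\alpha$ modulo $\tau$ already decides whether $c_0=0$ or $c_0\neq0$: by Lemma~\ref{lem:hyperKC_div} this is exactly the dichotomy $\tau\mid\alpha$ versus $\tau\nmid\alpha$. When $c_0\neq0$ the non-adjacency condition forces $c_1=0$, hence $\alpha\equiv c_0\pmod{\tau^2}$. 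Thus $c_0$ is uniquely pinned down as soon as I know \textbf{(i)} that $0$ is the only element of $\widetilde{\mathcal{D}_j}$ divisible by $\tau$, and \textbf{(ii)} that distinct nonzero elements of $\widetilde{\mathcal{D}_j}$ are pairwise incongruent modulo $\tau^2$.

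Both claims reduce to integer congruences through Lemma~\ref{lem:hyperKC_div}. Writing an element as $s+t\tau+u\tau^2+v\tau^3$, the lemma gives $\tau\mid\beta\iff 4\mid s$ and $\tau^2\mid\beta\implies(4\mid s$ and $4\mid(\mu s/2+t))$. For claim (i), every nonzero digit has the form $c'+c''\tau$ with $c'\in\{\pm1,\pm2\}$ (visible from the definitions of $\widetilde{\mathcal{D}_0}$, $\widetilde{\mathcal{D}_1^{j_1}}$, $\widetilde{\mathcal{D}_2^{j_2}}$), so $4\nmid c'$ and $\tau\nmid c$; this holds uniformly in $j$. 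For claim (ii) I would apply the divisibility criterion to the difference of two digits $c=c'+c''\tau$ and $d=d'+d''\tau$, yielding
\[
c\equiv d\pmod{\tau^2}\iff 4\mid(c'-d')\ \text{and}\ 4\mid\bigl(\mu(c'-d')/2+(c''-d'')\bigr),
\]
and then check, for each of the $16$ sets $\widetilde{\mathcal{D}_j}=\widetilde{\mathcal{D}_0}\cup\widetilde{\mathcal{D}_1^{j_1}}\cup\widetilde{\mathcal{D}_2^{j_2}}$, that no pair of distinct nonzero digits satisfies both divisibilities. Since $c',d'\in\{-2,-1,0,1,2\}$, the first condition $4\mid(c'-d')$ forces either $c'=d'$ or $\{c',d'\}=\{-2,2\}$, so only finitely many pairs need inspection: the pairs with $c'=d'$ are settled by examining $c''-d''$, and the pairs with $\{c',d'\}=\{-2,2\}$ — which can only involve the digits $\pm(2\pm\tau)$ of $\widetilde{\mathcal{D}_1^{j_1}}$ together with $\pm2$ or with one another — are where Condition~4 does its work.

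Granting (i) and (ii), uniqueness follows by induction on the length of the expansion, which is finite by Theorem~\ref{thm:hyperKC_tnaf_existence}. Given two $\tau$-NAFs of the same $\alpha$, the discussion above forces their lowest digits to coincide; if that common digit is $0$ I divide by $\tau$, and if it is nonzero I use $c_1=0$ to divide by $\tau^2$, obtaining in either case two shorter $\tau$-NAFs of the same element ($\alpha/\tau$ or $(\alpha-c_0)/\tau^2$), to which the induction hypothesis applies, with $\alpha=0$ as the trivial base case. The main obstacle is precisely the finite verification in (ii): one must confirm that within a single $\widetilde{\mathcal{D}_1^{j_1}}$ the two chosen representatives are never negatives of one another, so that a $c'-d'=\pm4$ collision with $4\mid\bigl(\mu(c'-d')/2+(c''-d'')\bigr)$ cannot arise — which is exactly what Conditions~3 and~4 guarantee — and to carry out the analogous check for all cross-terms among $\widetilde{\mathcal{D}_0}$, $\widetilde{\mathcal{D}_1^{j_1}}$ and $\widetilde{\mathcal{D}_2^{j_2}}$ across the $16$ digit sets and both signs of $\mu$.
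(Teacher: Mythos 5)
Your proposal is correct and follows essentially the same route as the paper: both arguments reduce uniqueness to showing that two distinct digits of $\widetilde{\mathcal{D}_{j}}$ cannot be congruent modulo ${\tau}^{2}$, using Lemma~\ref{lem:hyperKC_div} to translate this into the congruences $4 \mid (c^{\prime}-d^{\prime})$ and $4 \mid (\mu(c^{\prime}-d^{\prime})/2 + (c^{\prime\prime}-d^{\prime\prime}))$, with Condition~3 restricting $c^{\prime}-d^{\prime}$ to $0$ or $\pm{4}$ and Condition~4 eliminating the remaining collisions. The only difference is presentational: you phrase the peeling step as an induction on expansion length with a forced lowest digit, whereas the paper divides out at the first differing index and derives a contradiction directly.
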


\begin{proof}
Let us assume the contrary and seek a contradiction. 
Suppose that there exists an element $\alpha \in \mathbb{Z}[\tau]$ 
which has two different such representations 
\[
\alpha = %
\sum_{i=0}^{\ell-1} {c_i} {\tau}^{i} = %
\sum_{i=0}^{{\ell}^{\prime}-1} {b_i} {\tau}^{i}, \quad 
{c_i} = c_i^{\prime} + c_i^{\prime\prime}{\tau}, \ %
{b_i} = b_i^{\prime} + b_i^{\prime\prime}{\tau} %
\in \widetilde{\mathcal{D}_{j}}. %
\]
By padding zeros if necessary, 
we can rewrite the above equation as 
\[
\sum_{i=0}^{\ell-1} {c_i} {\tau}^{i} = %
\sum_{i=0}^{{\ell}-1} {b_i} {\tau}^{i}. %
\]
Let ${i_0}:= \min \{ i \in \{0, 1, \ldots, \ell-1\} \mid %
{b_i} \neq {c_i} \}$. 
Replacing $\sum_{i=0}^{\ell-1} {c_i} {\tau}^{i}$ 
by $(\sum_{i=0}^{\ell-1} {c_i} {\tau}^{i} %
- \sum_{i=0}^{{i_0}-1} {c_i} {\tau}^{i})/{\tau}^{i_0}$ 
and 
$\sum_{i=0}^{\ell-1} {b_i} {\tau}^{i}$ 
by $(\sum_{i=0}^{\ell-1} {b_i} {\tau}^{i} %
- \sum_{i=0}^{{i_0}-1} {b_i} {\tau}^{i})/{\tau}^{i_0}$, 
we may assume that ${c_0} \neq {b_0}$. 
From Lemma~\ref{lem:hyperKC_div}, we must have 
${c_0} \neq 0$ and ${b_0} \neq 0$. 
We also have ${c_1} = {b_1} = 0$. 
Then $-({c_0} - {b_0}) = \sum_{i=2}^{\ell-1} %
({c_i} - {b_i}) {\tau}^{i}$ is divisible by ${\tau}^{2}$. 
Hence by Lemma~\ref{lem:hyperKC_div}, 
we obtain $4 \mid (b_0^{\prime} - c_0^{\prime})$ and 
$4 \mid (\mu(b_0^{\prime} - c_0^{\prime})/2 + (b_0^{\prime\prime} - c_0^{\prime\prime}))$. 
From Condition~3, we immediately have 
$b_0^{\prime} - c_0^{\prime} = 0$ or $\pm{4}$. 
We shall see that in each case, there is a contradiction. 

\medskip
\noindent
{\bf Case~1. } 
Suppose $b_0^{\prime} - c_0^{\prime} = 0$. 
From $4 \mid (\mu(b_0^{\prime} - c_0^{\prime})/2 + (b_0^{\prime\prime} - c_0^{\prime\prime}))$, 
we have 
$b_0^{\prime\prime} - c_0^{\prime\prime} = 0$ or $\pm{4}$. 
If $b_0^{\prime\prime} - c_0^{\prime\prime} = 0$, 
then ${c_0} = {b_0}$, contrary to our assumption. 
Thus $b_0^{\prime\prime} - c_0^{\prime\prime} = \pm{4}$ 
implies that 
$b_0^{\prime\prime} = 2$ and $c_0^{\prime\prime} = -2$, 
or $b_0^{\prime\prime} = -2$ and $c_0^{\prime\prime} = 2$. 
Hence ${b_0} = b_0^{\prime} + 2{\tau}$ and 
${c_0} = b_0^{\prime} - 2{\tau}$ or 
${b_0} = b_0^{\prime} - 2{\tau}$ and 
${c_0} = b_0^{\prime} + 2{\tau}$. 
However, by Condition~4, there do not exist 
two elements of the form $b_0^{\prime} \pm{2}{\tau}$ 
in $\widetilde{\mathcal{D}_{j}}$ simultaneously 
for each $j$ ($1 \leq j \leq 16$). 
This is a contradiction. 

\medskip
\noindent
{\bf Case~2. } 
Suppose $b_0^{\prime} - c_0^{\prime} = \pm{4}$. 
Since $\mu(b_0^{\prime} - c_0^{\prime})/2 + (b_0^{\prime\prime} - c_0^{\prime\prime}) = %
\pm{2}{\mu} + (b_0^{\prime\prime} - c_0^{\prime\prime})$, 
we obtain 
$b_0^{\prime\prime} - c_0^{\prime\prime} = \pm{2}{\mu}$. 
Thus $b_0^{\prime\prime} = \pm{\mu}$ and 
$c_0^{\prime\prime} = \mp{\mu}$. 
Consequently 
$({b_0}, {c_0}) = %
(2 + {\mu}{\tau}, -2 - {\mu}{\tau}), 
(2 - {\mu}{\tau}, -2 + {\mu}{\tau}), 
(-2 + {\mu}{\tau}, 2 - {\mu}{\tau}), 
(-2 - {\mu}{\tau}, 2 + {\mu}{\tau})$. 
However, as in case~1, 
it does not satisfy 
${b_0} \in \widetilde{\mathcal{D}_{j}}$ and 
${c_0} \in \widetilde{\mathcal{D}_{j}}$ simultaneously 
for each $j$ ($1 \leq j \leq 16$). 
This is a contradiction. 

\medskip
\noindent
Therefore the proof is complete. 
\end{proof}

Recall that the ${\tau}$-NAF on Koblitz curves has 
existence, uniqueness, and optimality. 
In contrast to the ${\tau}$-NAF on Koblitz curves, 
it does not only hold the optimality. 

\begin{thm} 
\label{thm:hyperKC_tnaf_non_optimality}
{\rm\bfseries 
[Non-minimality of ${\tau}$-NAF]}\ 
It does not hold the minimality for ${\tau}$-NAF %
with respect to the digit set $\widetilde{\mathcal{D}_{j}}$ 
$(j = 1, \ldots, 16)$. 
\end{thm}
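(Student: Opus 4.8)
The plan is to refute minimality by exhibiting, for each digit set $\widetilde{\mathcal{D}_{j}}$, an explicit element $\alpha \in \mathbb{Z}[\tau]$ whose $\tau$-NAF---unique by Theorem~\ref{thm:hyperKC_tnaf_unique}---has strictly larger Hamming weight than some other Frobenius expansion of $\alpha$ over the same digit set. Since a general Frobenius expansion need not satisfy the non-adjacency condition ${c_{i+1}}{c_i} = 0$, it suffices to produce a shorter expansion in which adjacent nonzero digits are allowed.

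First I would isolate a convenient source of witnesses. Every digit set contains the common core $\widetilde{\mathcal{D}_{0}} = \{0, \pm{1}, \pm{2}, 1\pm{\tau}, -1\pm{\tau}\}$, so for any nonzero ${c_0}, {c_1} \in \widetilde{\mathcal{D}_{0}}$ the element $\alpha = {c_1}{\tau} + {c_0}$ admits the weight-$2$ Frobenius expansion $({c_1}, {c_0})_{\tau}$ that is valid in every $\widetilde{\mathcal{D}_{j}}$. The task then reduces to choosing such an $\alpha$ that is not itself a single digit of $\widetilde{\mathcal{D}_{j}}$ and whose $\tau$-NAF has Hamming weight at least $3$.

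Concretely, for $\widetilde{\mathcal{D}_{1}}$ with $\mu = 1$ I would take $\alpha = -2 - {\tau} = (-1, -2)_{\tau}$, a Frobenius expansion of weight $2$ (note ${c_1}{c_0} = 2 \neq 0$, so it is not the NAF). Because $-2 - {\tau} \notin \widetilde{\mathcal{D}_{1}}$, its $\tau$-NAF is the length-$8$ expansion $(1, 0, 0, -2, 0, 2+{\tau}, 0, 2+{\tau})_{\tau}$ recorded in Table~\ref{table:hyperKC_tnaf_existence_D1_mu=1}, whose Hamming weight is $4 > 2$. This single instance already establishes non-minimality for $\widetilde{\mathcal{D}_{1}}$; for the remaining $j$ one reads off an analogous witness $\alpha = {c_1}{\tau} + {c_0}$ (${c_0}, {c_1} \in \widetilde{\mathcal{D}_{0}}$) from the corresponding appendix table, whose listed $\tau$-NAF is forced to invoke the particular extended digits of $\widetilde{\mathcal{D}_{j}}$ and hence carries weight at least $3$.

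The main obstacle is uniformity across the sixteen digit sets: no single $\alpha$ serves all of them, since for instance $-2 - {\tau}$ is itself a digit of $\widetilde{\mathcal{D}_{j}}$ whenever $j_1 \in \{2, 3\}$, in which case its $\tau$-NAF collapses to weight $1$ and the comparison fails. The delicate step is therefore to verify, case by case, that the chosen witness is genuinely not a digit of $\widetilde{\mathcal{D}_{j}}$ \emph{and} that its unique $\tau$-NAF outweighs the exhibited adjacent expansion. This is exactly what the tabulated $\tau$-NAFs in the appendix permit one to check, so the argument is a finite inspection rather than a quantitative norm estimate.
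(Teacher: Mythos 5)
Your proposal is correct and uses the same underlying mechanism as the paper: produce a weight-$2$ expansion $c_1\tau + c_0$ with both digits in the common core $\widetilde{\mathcal{D}_0}$, then invoke uniqueness of the $\tau$-NAF to conclude that the (tabulated) NAF of weight at least $3$ cannot be minimal. The only real difference is the witness. The paper takes the single element $\alpha = 2 + 2\mu\tau = (2\mu, 2)_{\tau}$, which can never collapse to one digit because Condition~3 excludes $(|c^{\prime}|, |c^{\prime\prime}|) = (2,2)$; the whole verification then splits into just two cases according to whether $2 + \mu\tau \in \widetilde{\mathcal{D}_{j}}$, yielding NAFs of weight $3$ and $4$ respectively. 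Your witness $-2 - \tau$ is itself a digit for $j_1 \in \{2,3\}$ (you correctly flag this), so you must select a fresh witness and consult the appendix table for each remaining $j$ and each sign of $\mu$. That finite inspection does close the argument, but if you want to keep your two-core-digit framework with minimal bookkeeping, $2 + 2\mu\tau$ is the canonical choice of $c_1\tau + c_0$ that works uniformly for all sixteen digit sets and both values of $\mu$.
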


\begin{proof}
Consider $\alpha = 2{\mu}{\tau} + 2 = (2\mu, 2)_{\tau} \in \mathbb{Z}[\tau]$. 
Obviously, $(2\mu, 2)_{\tau}$ is a ${\tau}$-adic expansion 
with respect to the digit set $\widetilde{\mathcal{D}_{j}}$ for each $j$. 
The Hamming weight of the ${\tau}$-adic expansion 
$(2\mu, 2)_{\tau}$ is $2$. 
If $2 + {\mu}{\tau} \in \widetilde{\mathcal{D}_{j}}$ for some $j$, 
then $\mu = 1$ and $1 \leq j \leq 8$ or 
$\mu = -1$ and $1 \leq j \leq 4$, $9 \leq j \leq 12$. 
Moreover, it is easy to check that 
$(-\mu, 0, 0, 2 + {\mu}{\tau}, 0, -2)_{\tau}$ is a ${\tau}$-NAF of $\alpha$. 
The Hamming weight is $3$. 
If $2 + {\mu}{\tau} \not\in \widetilde{\mathcal{D}_{j}}$ for some $j$, 
we have $\mu = 1$ and $9 \leq j \leq 16$ or 
$\mu = -1$ and $1 \leq j \leq 4$, $9 \leq j \leq 12$. 
Then one can easily verify that 
$-2 - {\mu}{\tau} \in \widetilde{\mathcal{D}_{j}}$ 
and 
$(-\mu, 0, 0, 2, 0, -2 - {\mu}{\tau}, 0, -2)_{\tau}$ 
is a ${\tau}$-NAF of $\alpha$. 
The Hamming weight is $4$. 
Combining the above two cases yields that 
there exists an element $\alpha \in \mathbb{Z}[\tau]$ 
which does not have minimal Hamming weight 
among all $\tau$-adic expansions 
with respect to the digit set $\widetilde{\mathcal{D}_{j}}$. 
This completes the proof. 
\end{proof}


\section{Conclusion}
\label{sec:hyperKC_conclusion}

In this paper, we explored the three properties, 
namely, the existence, uniqueness, 
and the minimality of the Hamming weight 
for the two classes of ${\tau}$-adic expansions 
on hyperelliptic Koblitz curves. 
While the ${\tau}$-NAF on Koblitz curves 
inherits the outstanding properties of NAF, 
the $\tau$-adic expansion with the strategy 
at least one of four consecutive coefficients is zero, 
only has the existence. 
We also gave a detailed investigation 
for the ${\tau}$-NAF on hyperelliptic Koblitz curves. 
We showed that there exist 16 digit sets 
so that one can achieve the ${\tau}$-NAF. 
Furthermore, we proved that 
the ${\tau}$-NAF on hyperelliptic Koblitz curves 
has the existence and uniqueness. 
But, as is the case with higher width version 
of the $\tau$-NAF on Koblitz curves, 
the ${\tau}$-NAF on hyperelliptic Koblitz curves 
does not have the minimality. 
In order to determine 
which elements in $\mathbb{Z}[{\tau}]$ have minimal Hamming weight, 
a more sophisticated analysis will be required. 


\begin{small}

\end{small}

\appendix

\newpage

\section{Non-uniqueness of GLS expansions}
\label{sec:hyperKC_append_gls_non_uniqueness}

\begin{center}

\end{center}




\begin{thebibliography}{999}

\bibitem{ACDFLNV05}
R.~M.~Avanzi, H.~Cohen, C.~Doche, G.~Frey, 
T.~Lange, K.~Nguyen, and F.~Vercauteren, 
\emph{The Handbook of Elliptic and Hyperelliptic Curve Cryptography}, 
Chapman \& Hall/CRC, 2005. 

\bibitem{AHP05}
R.~M.~Avanzi, C.~Heuberger, and H.~Prodinger, 
Minimality of the Hamming Weight of the ${\tau}$-NAF 
for Koblitz Curves and Improved Combination with Point Halving, 
Proc.\ the 12th International Workshop on 
Selected Areas in Cryptography, SAC 2005, 
Vol.3897 of Lecture Notes in Computer Science (2006), 
Springer-Verlag, 332--344. 

\bibitem{AHP06a}
R.~M.~Avanzi, C.~Heuberger, and H.~Prodinger, 
On Redundant $\tau$-adic Expansions and Non-Adjacent Digit Sets, 
Proc.\ of the 13th International Workshop on 
Selected Areas in Cryptography, SAC 2006, 
Vol.4356 of Lecture Notes in Computer Science (2007), 
Springer-Verlag, 285--301. 

\bibitem{AHP06b}
R.~M.~Avanzi, C.~Heuberger, and H.~Prodinger, 
Scalar Multiplication on Koblitz Curves Using the Frobenius Endomorphism 
and Its Combination with Point Halving: Extensions and Mathematical Analysis, 
Algorithmica \textbf{46} (2006), no.~3--4, 249--270. 

\bibitem{AHP08}
R.~M.~Avanzi, C.~Heuberger, and H.~Prodinger, 
Redundant ${\tau}$-adic Expansions I: 
Non-Adjacent Digit Sets and their Applications to Scalar Multiplication, 
Des.\ Codes Cryptogr.\ \textbf{58} (2011), No.~2, 173--202. 

\bibitem{AHP10}
R.~M.~Avanzi, C.~Heuberger, and H.~Prodinger, 
Arithmetic of Supersingular Koblitz Curves in Characteristic Three, 
Cryptology ePrint Archive, Report 2010/436, 2010. 
Available at \url{http://eprint.iacr.org/2010/436}

\bibitem{BMX08}
I.~F.~Blake, V.~K.~Murty, and G.~Xu, 
Nonadjacent radix-${\tau}$ expansions of 
integers in Euclidean imaginary quadratic number fields, 
Canad.\ J.\ of Math.\ \textbf{60} (2008), No.6, 1267--1282. 

\bibitem{CL06}
J.~H.~Cheon and D.~H.~Lee, 
Use of Sparse and/or Complex Exponents 
in Batch Verification of Exponentiations, 
IEEE Transactions on Computers, 
vol.~55, no.~12 (2006), 1536--1542. 

\bibitem{CY07}
J.~H.~Cheon and J.~H.~Yi, 
Fast Batch Verification of Multiple Signatures, 
Proc.\ of the 10th International Conference 
on Practice and Theory in Public Key Cryptography, PKC 2007, 
vol.~4450 of Lecture Notes in Computer Science (2007), 
Springer-Verlag, 442--457. 

\bibitem{FP85}
U.~Finke and M.~Pohst, 
Improved Methods for Calculating Vectors of Short Length 
in a Lattice, Including a Complexity Analysis, 
Math.\ Comput.\ \textbf{44} (1985), no.~170, 463--482. 

\bibitem{GLS00}
C.~G{\"u}nther, T.~Lange, and A.~Stein, 
Speeding up the Arithmetic on Koblitz Curves of Genus Two, 
Research Report CORR {\#}2000-04, 
Department of Combinatorics and Optimization, 
Faculty of Mathematics, 
University of Waterloo, Canada, 2000. \\
Available from 
\url{http://math.uwaterloo.ca/combinatorics-and-optimization/research/technical-reports}

\bibitem{GLS01}
C.~G{\"u}nther, T.~Lange, and A.~Stein, 
Speeding up the Arithmetic on Koblitz Curves of Genus Two, 
Proc.\ of the 7th International Workshop on 
Selected Areas in Cryptography, SAC 2000, 
vol.~2012 of Lecture Notes in Computer Science (2001), 
Springer-Verlag, 106--117. 

\bibitem{HST10}
K.~Hakuta, H.~Sato, and T.~Takagi, 
Efficient arithmetic on subfield elliptic curves over small finite fields of odd characteristic, 
J.\ Math.\ Cryptol.\ \textbf{4} (2010), 
no.~3, 199--238. 

\bibitem{HST_JMI}
K.~Hakuta, H.~Sato, and T.~Takagi, 
Explicit lower bound for the length of minimal weight $\tau$-adic expansions on Koblitz curves, 
J.\ Math-for-Ind.\ \textbf{2A} (2010), 75--83. 

\bibitem{HKST12}
K.~Hakuta, Y.~Katoh, H.~Sato, and T.~Takagi, 
Batch Verification Suitable for Efficiently Verifying a Limited Number of Signatures, 
Proc.\ of the 15th International Conference on 
Information Security and Cryptology, ICISC 2012, 
vol.~7839 of Lecture Notes in Computer Science (2013), 
Springer-Verlag, 425--440. 

\bibitem{Heu10}
C.~Heuberger, 
Redundant ${\tau}$-adic Expansions II: Non-Optimality and Chaotic Behaviour, 
Math.\ Comput.\ Sci.\ \textbf{3} (2010), 141--157. 

\bibitem{HK10}
C.~Heuberger and D.~Krenn, 
Analysis of Width-$w$ Non-Adjacent Forms to Imaginary Quadratic Bases, 
arXiv preprint ArXiv:1009.0488 (2010). 

\bibitem{HK11}
C.~Heuberger and D.~Krenn, 
Optimality of the Width-$w$ Non-Adjacent Form: 
General Characterisation and the Case of Imaginary Quadratic Bases, 
arXiv preprint ArXiv:1110.0966 (2011). 

\bibitem{HK12}
C.~Heuberger and D.~Krenn, 
Existence and Optimality of $w$-Non-adjacent Forms 
with an Algebraic Integer Base, 
arXiv preprint ArXiv:1205.4414 (2012). 

\bibitem{Hul03}
K.~Hulek, 
Elementary Algebraic Geometry, 
AMS Student Mathematical Library, vol.~20, 2003. 

\bibitem{Kob89}
N.~Koblitz, 
Hyperelliptic Cryptosystems, 
J.\ Cryptology \textbf{1} (1989), no.~3, 139--150. 

\bibitem{Kob91}
N.~Koblitz, 
CM-curves with good cryptographic properties, 
Proc.\ of Advances in Cryptology -- CRYPTO 1991, 
vol.~576 of Lecture Notes in Computer Science (1992), 
Springer-Verlag, 279--287. 

\bibitem{Lan01}
T.~Lange, 
Efficient Arithmetic on Hyperelliptic Koblitz Curves, 
Ph.~D.~thesis, Universit\"{a}t Gesamthochschule Essen, 2001. 

\bibitem{Lan05}
T.~Lange, 
Koblitz curve cryptosystems, 
Finite Fields Appl.\ \textbf{11} (2005), no.~2, 200--229. 

\bibitem{Liu02}
Q.~Liu, 
Algebraic Geometry and Arithmetic Curves, 
Oxford Graduate Texts in Mathematics, vol.~6, 
Oxford University Press, Oxford, 2002. 

\bibitem{Mil85}
V.~Miller, 
Use of elliptic curves in cryptography, 
Proc.\ of Advances in Cryptology -- CRYPTO 1985, 
vol.~218 of Lecture Notes in Computer Science (1986), 
Springer-Verlag, 417--426. 

\bibitem{MWZ96}
A.~J.~Menezes, Y.~H.~Wu, and R.~J.~Zuccherato, 
An Elementary Introduction to Hyperelliptic Curves, 
Research Report CORR {\#}1996-19, 
Department of Combinatorics and Optimization, 
Faculty of Mathematics, 
University of Waterloo, Canada, 1996. \\
Available from 
\url{http://math.uwaterloo.ca/combinatorics-and-optimization/research/technical-reports}

\bibitem{Sil86}
J.~H.~Silverman, 
The Arithmetic of Elliptic Curves, 
GTM~106, 
Springer-Verlag, 1986. 

\bibitem{Sol00}
J.~A.~Solinas, 
Efficient Arithmetic on Koblitz Curves, 
Des. Codes Cryptogr \textbf{19} (2000), 
no.~2--3, 195--249. 

\end{thebibliography}
\end{document}